\renewcommand{\tilde}{\widetilde}
\def \R{\mathbb R}
\newtheorem{rem}{Remark}
\newcommand{\e}{{\mathrm e}}
\newcommand{\norm}[1]{\left \| #1 \right \|}
\newcommand{\x}{u}
\renewcommand{\tilde}{\widetilde}
\newcommand{\M}{\mathfrak{M}}
\newcommand{\RR}{\mathfrak{R}}
\newcommand{\G}{{\mathcal G}}
\numberwithin{equation}{section}
\title{A variational method for analyzing stochastic limit cycle oscillators\thanks{PCB and JNM were supported
by the National Science Foundation (DMS-1613048).}}
\author{Paul C. Bressloff\thanks{Department of Mathematics, University of Utah, Salt Lake City, UT 84112 USA  ({\tt bressloff@math.utah.edu}).} \and James N. MacLaurin\thanks{Department of Mathematics, University of Utah, Salt Lake City, UT 84112 USA  ({\tt maclaurin@math.utah.edu}).}}
\begin{document}

\maketitle

\begin{abstract}

We introduce a variational method for analyzing limit cycle oscillators in $\R^d$ driven by Gaussian noise. This allows us to derive exact stochastic differential equations (SDEs) for the amplitude and phase of the solution, which are accurate over times over order $\exp\big(Cb\epsilon^{-1}\big)$, where $\epsilon$ is the amplitude of the noise and $b$ the magnitude of decay of transverse fluctuations. Within the variational framework, different choices of the amplitude-phase decomposition correspond to different choices of the inner product space $\R^d$. For concreteness, we take a weighted Euclidean norm, so that the minimization scheme determines the phase by projecting the full solution on to the limit cycle using Floquet vectors. Since there is coupling between the amplitude and phase equations, even in the weak noise limit, there is a small but non-zero probability of a rare event in which the stochastic trajectory makes a large excursion away from a neighborhood of the limit cycle. We use the amplitude and phase equations to bound the probability of it doing this: finding that the typical time the system takes to leave a neighborhood of the oscillator scales as $\exp\big(Cb\epsilon^{-1}\big)$.

\end{abstract}

\begin{keywords}
stochastic oscillators
\end{keywords}
\begin{AMS}
60H20,60H25,92C20,92C15,92C17
\end{AMS}
\renewcommand{\thefootnote}{\fnsymbol{footnote}}

\section{Introduction}

A well-studied problem in dynamical systems theory is the construction and analysis of phase equations for stochastic limit cycle oscillators \cite{Erm10,nakao2016}. For example, consider the Ito stochastic differential equation (SDE) on $\R^d$,
\begin{equation}
du =  F(u)dt +  \sqrt{\epsilon}G(u) dW\label{Wdep}
\end{equation}
where $\epsilon >0$ determines the noise strength and $W_t$ is a vector of (correlated) Brownian motions with covariance $Q \in \mathbb{R}^{d \times d}$,
\[
\mathbb{E}\big[W(t) W^{\top}(t) \big] = tQ.
\]
Suppose that the deterministic equation for $\epsilon =0$,
\begin{equation}
\label{det}
\frac{du}{dt} =  F(u),\quad u \in \R^d
\end{equation}
with $F\in C^2$ has a stable periodic solution $u=U(t)$ with $U(t)=U(t+\Delta_0)$, where
$\omega_0=2\pi/\Delta_0$ is the natural frequency of the oscillator. In state space the solution is an isolated attractive trajectory
called a limit cycle.
The dynamics on the limit cycle can be described by a uniformly rotating phase such that
\begin{equation}
\frac{d\theta}{dt}=\omega_0,
\end{equation}
and $u={\Phi}(\theta(t))$ with ${\Phi}$ a $2\pi$-periodic function. Note that the phase is 
neutrally stable with respect to perturbations along the limit cycle -- this reflects invariance of an autonomous
dynamical system with respect to time shifts. Turning to the SDE (\ref{Wdep}), let us assume that
the noise amplitude $\epsilon$ is sufficiently small given the rate of attraction to the limit cycle, so that deviations transverse to the
limit cycle are also small (up to some exponentially large stopping time). This suggests that the definition of a phase variable persists in the stochastic setting, and one can derive a stochastic phase equation. However, there is not a unique way to define the phase, which has led to two complementary methods for obtaining a stochastic phase equation: (i) the method of isochrons \cite{Teramae04,Goldobin05,Nakao07,Yoshimura08,Teramae09}, and (ii) an explicit amplitude-phase decomposition \cite{Gonze02,Koeppl11,Bonnin17}. (See also the recent survey by Ashwin et al \cite{Ashwin16}.) A major point to note is that while many of the current definitions of the stochastic phase are only accurate on timescales of $O(\epsilon^{-1})$, the oscillator will typically stay in a neighborhood of the limit cycle for times of order $O\big(\exp(Cb\epsilon^{-1})\big)$ (where $b$ is the rate of decay of transverse fluctuations), and it is therefore desirable to have a definition of the phase on these much longer timescales. This is particularly important, since many of the cited papers are explicitly trying to study the long-time ergodic behavior of the oscillator.

In this paper, we introduce a variational method for carrying out the amplitude-phase decomposition, which yields exact SDEs for the amplitude and phase, similar to those recently obtained in \cite{Bonnin17} using the implicit function theorem. In addition to simplifying the derivation of these equations, the variational method provides a more general framework for analyzing stochastic dynamical systems with marginally stable degrees of freedom, see for example \cite{inglis16,lang2017finite,gottwald2017finite}. Within the variational framework, different choices of phase correspond to different choices of the inner product space $\R^d$. For concreteness, we take a weighted Euclidean norm, so that the minimization scheme determines the phase by projecting the full solution on to the limit cycle using Floquet vectors. Hence, in a neighborhood of the limit cycle the phase variable coincides with the isochronal phase \cite{Bonnin17}. This has the advantage that the amplitude and phase decouple to linear order. 

In addition, our variational method provides an explicit analytic expression for the phase SDE, which is accurate for exponentially long times, as well as a precise formula  for determining the phase given any particular realization of the SDE for $u_t$. More significantly, since the stochastic phase and amplitude do couple even in the weak noise limit, there is a small but non-zero probability of a rare event in which the stochastic trajectory makes a large excursion away from an $O(\epsilon^\rho)$ neighborhood of the limit cycle, for any $\rho < 1/2$. In this paper, we use the exact amplitude and phase equations to derive strong exponential bounds on the growth of transverse fluctuations. More precisely, we show that the expectation of the time it takes to leave an $O(\epsilon^{\rho})$ neighbourhood of the limit cycle scales as $\exp\big(Cb\epsilon^{2\rho-1}\big)$, for a constant $C$, where $b$ is the magnitude of decay of the transverse fluctuations. These bounds are thus very useful in both the small noise limit, and the limit of strong decay of transverse fluctuations (as discussed in \cite{Teramae09,Newby14}). Indeed they are accurate for finite $\epsilon / b$ and are more flexible and powerful than classical large deviations bounds. Our method is novel and uses a rescaling of time to demonstrate that the leading order behavior of the amplitude term is that of a stable Ornstein-Uhlenbeck Process. These bounds also mean that the SDE for the phase defined in \S 2 is well-defined for times of order $\exp\big(Cb / \epsilon\big)$.

 In the remainder of this section we briefly review the two main phase reduction methods. The variational formulation is introduced in \S 2, where we derive the exact amplitude and phase equations using Ito's lemma. In \S 3 we carry out a perturbation expansion in the weak noise limit and compare the resulting phase equation with previous versions. Finally, exponential bounds on transverse fluctuations are derived in \S 4.  
 
 \subsection{Isochrons and phase--resetting curves}

\begin{figure}[b!]
\begin{center}
\includegraphics[width=7cm]{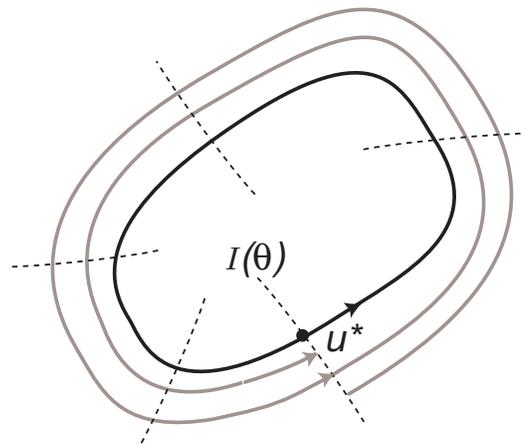}
\caption{\small Isochrones in the neighborhood of a stable limit cycle}
\label{isochrone}
\end{center}
\end{figure}

Suppose that we observe the unperturbed system (\ref{det}) stroboscopically at time intervals of length $\Delta_0$. This leads to a
Poincare mapping
\begin{equation*}
u(t)\rightarrow u(t+\Delta_0)\equiv {\mathcal P}(u(t)).
\end{equation*}
This mapping has all points on the limit cycle as fixed points. Choose a point $u^*$ on the cycle and consider
all points in the vicinity of $u^*$ that are attracted to it under the action of ${\mathcal P}$. They form a
$(d-1)$-dimensional hypersurface ${\mathcal I}$, called an isochron, crossing the limit cycle at $u^*$ (see Fig. \ref{isochrone}) \cite{Winfree80,Kuramoto84,Glass88,Holmes04}. A unique isochron can be drawn through each point on the limit cycle (at least locally) so the isochrons can be parameterized by the phase,
${\mathcal I}={\mathcal I}(\theta)$. Finally, the definition of phase is extended by taking all points $\x\in {\mathcal I}(\theta)$ to have the same phase,
$\Theta(\x)=\theta$, which then rotates at the natural frequency $\omega_0$ (in the
unperturbed case). Hence, for an unperturbed oscillator in the vicinity of the limit cycle we have
\begin{eqnarray}
\omega_0 = \frac{d\Theta}{dt}=\sum_{k=1}^d\frac{\partial \Theta}{\partial u_k}\frac{du_k}{dt} =\sum_{k=1}^d\frac{\partial \Theta}{\partial u_k}F_k(u) .\nonumber
\end{eqnarray}
Now consider the deterministically perturbed system
\begin{equation}
\label{utt}
\frac{du}{dt} =  F(u)+\sqrt{\epsilon}G(u,t),
\end{equation}
where $G$ is a $\Delta$-periodic function of $t$, say.
Keeping the definition of isochrons for the unperturbed system, one finds that to leading order
\begin{equation}
\frac{d\Theta}{dt}=\sum_{k=1}^d\frac{\partial \Theta}{\partial u_k}(F_k(u)+\sqrt{\epsilon} G_k({u},t))=\omega_0+\sqrt{\epsilon}\sum_{k=1}^d\ \frac{\partial \Theta}{\partial u_k}G_k(u,t). \nonumber
\end{equation}
As a further leading order approximation, deviations of $u$ from the limit cycle are ignored. Hence, setting $u(t)=\Phi(\omega_0 t)$ with $\Phi$ the $2\pi$-periodic solution on the limit cycle,
\begin{equation}
\frac{d\Theta}{dt}=\omega_0+\sqrt{\epsilon}\sum_{k=1}^d \left .\frac{\partial \Theta}{\partial u_k}\right |_{u=\Phi}G_k(\Phi,t) .\nonumber
\end{equation}
Finally, since points on the limit cycle are in 1:1 correspondence with the phase $\theta$, one can set $U=U(\theta)$ and $\Theta(U(\theta))=\theta$ to obtain the closed
phase equation
\begin{equation}
\frac{d\theta}{dt}=\omega_0+\sqrt{\epsilon} \sum_{k=1}^d R_k(\theta)G_k(\Phi(\theta),t)
\label{cbphase}
\end{equation}
where
\begin{equation}
R_k(\theta)= \left . \frac{\partial \Theta}{\partial u_k}\right |_{u=\Phi(\theta)}
\label{Q2}
\end{equation}
is a $2\pi$-periodic function of $\theta$ known as the $k$th component of the phase response curve (PRC).

It is well known that the PRC $R(\theta)$ can also be obtained as a $2\pi$-periodic solution of the linear equation \cite{Erm96,Erm10,nakao2016}
\begin{equation}
\label{adj}
\omega_0\frac{dR(\theta)}{d\theta}=-J(\theta)^{\top}\cdot  R(\theta),
\end{equation}
with the normalization condition
\begin{equation}
R(\theta)\cdot \frac{d\Phi(\theta)}{d\theta}=1.
\end{equation}
Here $J(\theta)^{\top}$ is the transpose of the Jacobian matrix $J(\theta)$, i.e.
\begin{equation}
\label{Jac}
J_{jk}(\theta)\equiv \left . \frac{\partial F_j}{\partial u_k}\right |_{u=\Phi(\theta)}.
\end{equation}
 It should be noted that we can evaluate the multiplication of the Jacobian by the derivative of $\Phi$ by differentiating the unperturbed ODE on the limit cycle, 
\[\omega_0 \frac{d\Phi}{d\theta} = F(\Phi(\theta)),\]
with respect to $\theta$. This gives
\begin{equation}
\frac{d}{d\theta} \left (\frac{d\Phi}{d\theta}\right )=\omega_0^{-1}J( \theta)\cdot \frac{d\Phi}{d\theta},
\label{nadj}
\end{equation}

The  next step is to assume that the above phase reduction procedure can also be applied to the SDE (\ref{Wdep}). This would then lead to the stochastic phase equation
\begin{equation}
d\theta=\omega_0dt +\sqrt{\epsilon} \sum_{k,l=1}^d R_k(\theta)G_{kl}(\Phi(\theta))dW_l(t).
\label{cbphase2}
\end{equation}
However, this does not take proper account of stochastic calculus as expressed by Ito's lemma \cite{Gardiner09}. That is, the phase reduction procedure assumes that the ordinary rules of calculus apply. In the stochstic setting, this only holds if the multiplicative white noise term in equations (\ref{Wdep}) and (\ref{cbphase2}) are interepreted in the sense of Stratonovich. However, the Ito form of the stochastic phase equation is more useful when calculating correlations, for example. Hence, converting equation (\ref{cbphase2}) from Stratonovich to Ito using Ito's lemma gives \cite{Yoshimura08,Teramae09}
\begin{equation}
d\theta=\left [ \omega_0+\epsilon \sum_{k=1}^d Z'_k(\theta)Q_{kl}Z_l(\theta)\right ]dt +\sqrt{\epsilon} \sum_{k,l=1}^d Z_k(\theta)dW_l(t),
\label{cbphase3}
\end{equation}
where we have set
\begin{equation}
\label{Z}
Z_l(\theta)=\sum_{k=1}^dR_k(\theta)G_{kl}(\Phi(\theta)).
\end{equation}
Hence, Ito's lemma yields an $O(\epsilon)$ contribution to the phase drift.  Another subtle feature of the stochastic phase reduction procedure is that another $O(\epsilon)$ contribution occurs when taking into account perturbations transverse to the limit cycle \cite{Yoshimura08}. However, the latter contribution is negligible if the limit cycle is strongly attracting \cite{Teramae09}. 

\subsection{Amplitude-phase decomposition}

\begin{figure}[b!]
\centering
\includegraphics[width=8cm]{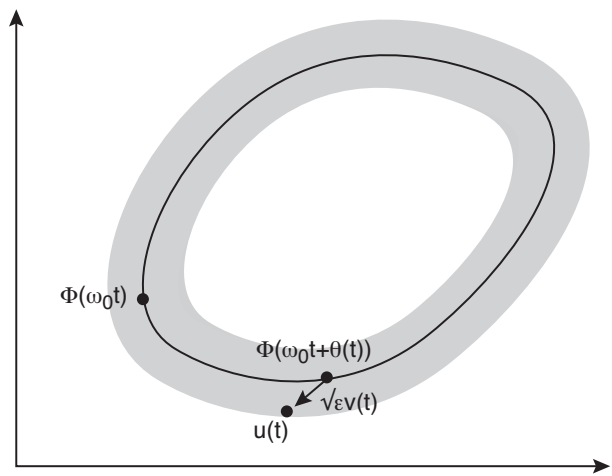}
\caption{Decomposition of the stochastic solution $u(t)$ into a random phase shift $\theta(t)$ along the deterministic limit cycle and a random transversal component $\sqrt{\epsilon}v(t)$.}
\label{scycle}
\end{figure}

\begin{figure}[t!]
\centering
\includegraphics[width=15cm]{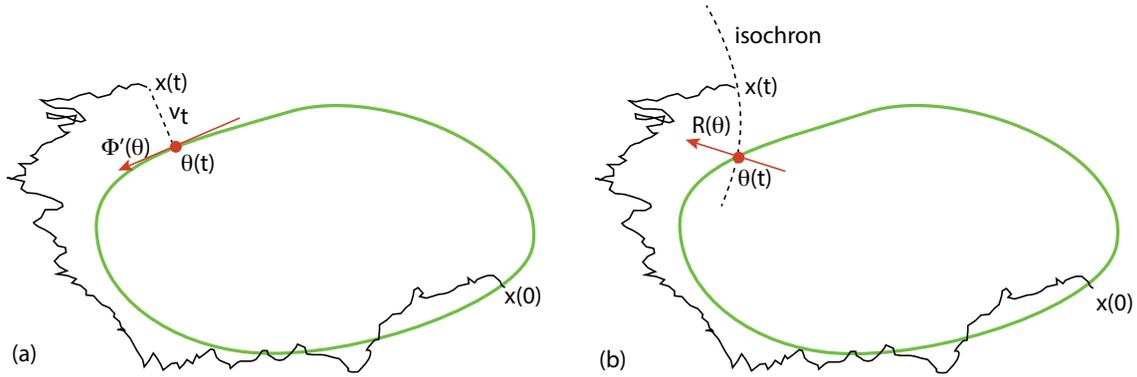}
\caption{The two different projection schemes highlighted in Ref. \cite{Bonnin17}. (a) Orthogonal projection with respect to the Euclidean norm of the solution $x(t)$ at time $t$ on to the limit cycle. Response to perturbations depends on the tangent vector to the limit cycle, $\Phi'(\theta)$ (b) The method of isochrons determines the phase $\theta(t)$ by tracing where the isochron through $x(t)$ intersects the limit cycle.The response to perturbations depends on the phase response curve ${R}(\theta)$, which is normal to the isochron at the point of intersection with the limit cycle.}
\label{fig:var}
\end{figure}

An alternative way to derive a stochastic-phase equation is to explicitly decompose the solution of (\ref{Wdep}) into longitudinal (phase) and transverse (amplitude) fluctuations of the limit cycle \cite{Boland09,Koeppl11,Bonnin17}. The basic intuition is that Gaussian-like transverse fluctuations are distributed in a tube of radius $1/\sqrt{\epsilon}$ (up to some stopping time), whereas the phase around the limit cycle undergoes Brownian diffusion. Thus, the solution is decomposed in the form
\begin{equation}
\label{defcon}
u(t)=\Phi(\omega_0t+\theta(t))+\sqrt{\epsilon}v(t),
\end{equation}
where the scalar random variable $\theta(t)$ represents the undamped random phase shift along the limit cycle, and $v(t)$ is a transversal perturbation, see Fig. \ref{scycle}. Since there is no damping of fluctuations along the limit cycle, the random phase $\theta(t)$ is taken to undergo Brownian motion. However, it is important to note that the decomposition (\ref{defcon}) is not unique, so that the precise definition of the phase depends on the particular method of analysis. 
For example, one study defines the phase so that there is no drift \cite{Koeppl11}. On the other hand, Gonze et al. \cite{Gonze02} focus on determining an effective phase diffusion coefficient based on a WKB approximation of solutions to the corresponding Fokker-Planck equation. Finally, Bonnin \cite{Bonnin17} combines an amplitude-phase decomposition with Floquet theory to show that if Floquet vectors are used, then the resulting phase variable in a neighborhood of the limit cycle coincides with the asymptotic phase based on isochrons, see Fig. \ref{fig:var}.

\section{Variational method}
Suppose that the deterministic ODE
\begin{equation}
\frac{du_t}{dt} =  F(u_t),\quad u_t \in \R^d
\end{equation}
supports a stable periodic solution of the form $u_t=\Phi(\omega_0 t)$ with $\Phi(\omega_0 t+2\pi n) =\Phi(t)$ for all integers $n$, and $\Delta_0 =2\pi/\omega_0$ is the fundamental period of the oscillator. We are interested in deriving a stochastic equation for the effective phase of the oscillator when the system is perturbed by weak noise. Therefore,
consider the Ito SDE\footnote{It would be straightforward to extend the results of the paper if we were to interpret the stochastic integrals in the Stratonovich sense.}
\begin{equation}
du_t =  F(u_t)dt +  \sqrt{\epsilon}G(u_t) dW_t\label{dep2}
\end{equation}
where $\epsilon >0$ determines the noise strength.
Here $W_t$ is a vector of (potentially correlated) Brownian motions with covariance $Q \in \mathbb{R}^{d \times d}$,
\[
\mathbb{E}\big[W_t W_t^{\top} \big] = tQ.
\]
In the above, $G$ is a Lipschitz map from $\R^d \to \R^{d\times d}$. Throughout this paper, for any matrix $A$, $\norm{A}$ denotes the spectral norm. We assume a uniform bound on the spectral norm of $G$, i.e. there exists a constant $\lambda_G$ such that
\begin{equation}
\sup_{u \in \R^d}\norm{G(u)} \leq \lambda_G.
\end{equation}

In the presence of noise we wish to decompose the solution $u_t$ into two components: the `closest' point of $\Phi(\beta_t)$ to $u_t$ for a $\R^d$-valued process $\beta_t$, and an `error' $v_t$ that represents the amplitude of transversal fluctuations:
\begin{equation}
\label{dep}
u_t=\Phi(\beta_t)+\sqrt{\epsilon}v_t,\quad v_t \in \R^d.
\end{equation}
However, as pointed out in \S 1.2, such a decomposition is not unique unless we impose an additional mathematical constraint. We will adapt a variational principle previously introduced by Inglis and Maclaurin \cite{inglis16} within the context of traveling waves in stochastic neural fields. First, we must introduce a little Floquet Theory.

\subsection{Floquet decomposition and weighted norm} For any $0 \leq t$, define $\Pi(t) \in \R^{d\times d}$ to be the following Fundamental matrix for the ODE
\begin{equation}\label{eq: ODE Jz}
\frac{d z}{dt} = J(t)z
\end{equation}
for $J(t)=J (\omega_0 t)$. That is, $\Pi(t):=  \big( z_1(t) | z_2(t) | \ldots |z_d(t) \big)$, where $z_i(t)$ satisfies \eqref{eq: ODE Jz}, $z_1(0) = \Phi'(0)$, and $\lbrace z_i(0) \rbrace_{i=1}^d$ is an orthogonal basis for $\R^d$. Floquet Theory states that there exists a diagonal matrix $\mathcal{S}=\mbox{diag}(\nu_1,\ldots,\nu_d)$ whose diagonal entries are the Floquet characteristic exponents, such that
\begin{equation}
\label{pip}
\Pi\big( t\big) = P\big(\omega_0 t \big)\exp\big(t\mathcal{S} \big)P^{-1}(0),
\end{equation}
with $P(\theta)$ a $2\pi$-periodic matrix whose first column is $\Phi'(\omega_0t)$, and $\nu_1 = 0$. That is, $P(\theta)^{-1}\Phi'(\theta) ={\bf e}$ with ${\bf e}_{j}=\delta_{1,j}$. In order to simplify the following notation, we will assume throughout this paper that the Floquet multipliers are real and hence $P(\theta)$ is a real matrix. One could readily generalize these results to the case that $\mathcal{S}$ is complex. The limit cycle is taken to be stable, meaning that for a constant $b > 0$, for all $2\leq i \leq d$,
\begin{equation}
\nu_i \leq - b.
\end{equation}
It follows from the fact that $F \in \mathcal{C}^2$ and $P \in \mathcal{C}^2$. Furthermore $P^{-1}(\theta)$ exists for all $\theta$, since $\Pi^{-1}(t)$ exists for all $t$.

The above Floquet decomposition motivates the following weighted inner product: For any $\theta \in \R$, denoting the standard Euclidean dot product on $\mathbb{R}^d$ by $\langle \cdot , \cdot\rangle$,
\[
\langle u,v \rangle_\theta = \big\langle P^{-1}(\theta)u,P^{-1}(\theta)v \big\rangle,
\]
and $\norm{u}_\theta = \sqrt{\langle u,u\rangle_{\theta}}$. This weighting is useful for two reasons: it leads to a leading order separation of the phase from the amplitude (see \S 3), and it facilitates the strong bounds of \S 4 because the weighted amplitude always decays, no matter what the phase. The former is a consequence of the fact that the matrix $P^{-1}(\theta)$ generates a coordination transformation in which the phase in a neighborhood of the limit cycle coincides with the asymptotic phase defined using isochrons (see also \cite{Bonnin17}). This is reflected by the following relationship between the tangent vector to the limit cycle, $\Phi'(\theta)$, and
 the PRC $R(\theta)$ of equation (\ref{Q2}):
 \begin{equation}
\label{Rtan}
\M(\theta)P^{\top}(\theta){R}(\theta)=P^{-1}(\theta)\Phi'(\theta),
\end{equation}
where
\begin{equation}
\M(\theta):= \norm{P^{-1}(\theta)\Phi'(\theta)}^2.
\end{equation}

We will proceed by defining $R(\theta)$ according to equation (\ref{Rtan}) and showing that it satisfies the adjoint equation (\ref{adj}). We will need the relation
\begin{equation}\label{eq: derivative P 0}
\omega_0 P'(\theta) = J(\theta)P(\theta)- P(\theta)\mathcal{S},
\end{equation}
which can be obtained by differentiating \eqref{pip}.
Differentiating both sides of equation (\ref{Rtan}) with respect to $\theta$, we have
\begin{align}
\label{Rtan2}
\M'P^{\top}R+\M P^TR'+\M (P^{\top})'R=P^{-1}\Phi''+(P^{-1})'\Phi',
\end{align}
with
\begin{align*}
\M'=2\bigg \langle P^{-1}\Phi''+(P^{-1})'\Phi',P^{-1}\Phi'\bigg \rangle.
\end{align*}
Equation (\ref{eq: derivative P 0}) implies that
\begin{align*}
\omega_0 (P^{\top}(\theta))' = P^{\top}(\theta)J^{\top}(\theta)- \mathcal{S}P^{\top}(\theta)
\end{align*}
and
\[\omega_0(P^{-1}(\theta))'=-P^{-1}(\theta)J(\theta)+\mathcal{S}P^{-1}(\theta).\]
We have used the fact that ${\mathcal S}$ is a diagonal matrix and $P^{-1}P'+(P^{-1})'P=0$ for any square matrix. Substituting these identities in equation (\ref{Rtan2}) yields
\begin{align*}
&\M' P^{\top}R+\M P^T(R'+\omega_0^{-1}J^{\top}R)-\omega_0^{-1}\M \mathcal{S}P^{\top}R\\
&\qquad =P^{-1}[\Phi''-\omega_0^{-1}J\Phi']+\omega_0^{-1}\mathcal{S}P^{-1}\Phi',
\end{align*}
and
\[\M'=\bigg \langle P^{-1}[\Phi''-\omega_0^{-1}J\Phi']+\omega_0^{-1}\mathcal{S}P^{-1}\Phi',P^{-1}\Phi'\bigg \rangle.\]
Now note that $\Phi'$ satisfies equation (\ref{nadj}) and $\mathcal{S}P^{-1}\Phi'=0$. The latter follows from the condition $P(\theta)^{-1}\Phi'(\theta) ={\bf e}$ and ${\mathcal S}{\bf e}=\nu_1=0$. It also holds that $\M'(\theta)=0$. (In fact, for the specific choice of $P(\theta)$, we have $\M(\theta)=\langle {\bf e},{\bf e}\rangle =1$.) Finally, from the definition of $(R(\theta)$, equation (\ref{Rtan}), we deduce that $\mathcal{S}P^{\top}(\theta)R(\theta)=0$ and hence
\[\M P^T(R'+\omega_0^{-1}J^{\top}R)=0.\]
Since $P^T(\theta)$ is non-singular for all $\theta$, $R$ satisfies equation (\ref{adj}) and can thus be identified as the PRC.

\subsection{Defining the stochastic phase using a variational principle}
We can now state the variational principle for the stochastic phase: $\beta_t$ is determined by requiring $\beta_t=a_t(\theta_t)$, where $a_t(\theta_t)$ for a prescribed time dependent weight $\theta_t$ is a local minimum of the following variational problem:
\begin{equation}\label{minim}
\underset{a\in {\mathcal N}(a(\theta_t))} \inf\| u_t-\Phi(a)\|_{\theta_t} =\| u_t-\Phi(a_t(\theta_t))\|_{\theta_t} ,
\end{equation}
with ${\mathcal N}\big(a_t(\theta_t)\big)$ denoting a sufficiently small neighborhood of $a_t\big(\theta_t\big)$. The minimization scheme is based on the orthogonal projection of the solution on to the limit cycle with respect to the weighted Euclidean norm at some $\theta_t$. 
We will derive an exact SDE for $\beta_t$ (up to some stopping time) by considering the first derivative
\begin{equation}\label{eq: Gi definition}
\G_0(z,a,\theta):=\frac{\partial }{\partial a} \| z-\Phi(a)\|^2_{\theta}  =-2\left \langle z-\Phi(a),\Phi'(a)\right \rangle_{\theta}.
\end{equation}
At the minimum, 
\begin{equation}\label{eq: minimum G 0}
\G_0(u_t,\beta_t,\theta_t)=0.
\end{equation}
We stipulate that the location of the weight must coincide with the location of the minimum, i.e. $\beta_t = \theta_t$, so that $\beta_t$ must satisfy the implicit equation
\begin{equation}\label{eq: minimum G 2}
\G(u_t,\beta_t):=\G_0(u_t,\beta_t,\beta_t)=0.
\end{equation}
It will be seen that, up to a stopping time $\tau$, there exists a unique continuous solution to the above equation. Note that we could have defined $\beta_t$ according to
\begin{equation}\label{minim2}
\underset{a\in {\mathcal N}(\beta_t)} \inf\| u_t-\Phi(a)\|_{a} =\| u_t-\Phi(\beta_t)\|_{\beta_t} ,
\end{equation}
which might seem more intuitive. However to leading order in $(u_t-\Phi(\beta_t))$, the above two schemes are equivalent, and we prefer the former because it leads to simpler equations.

Define $\M(z,a) \in \R$ according to
\begin{align}
\M(z,a)&:=\frac{1}{2}\frac{\partial \G(z,a)}{\partial a}=\frac{1}{2}\left . \frac{\partial \G_0(z,a,\theta)}{\partial a}\right |_{\theta=a}+\frac{1}{2}\left . \frac{\partial \G_0(z,a,\theta)}{\partial \theta}\right |_{\theta=a\nonumber }\\
&= 1 - \left\langle z-\Phi(a), \Phi''(a)\right \rangle_{a}
 - \left\langle z-\Phi(a),\frac{d}{da}\big[P^{-\top}(a)P^{-1}(a)\big]\Phi'(a)\right \rangle,
\label{curve}
\end{align}
where we have used the fact that $\norm{\Phi'(a)}^2_{a}=1$, which we proved in the previous section. Assume that initially $\M(u_0,\beta_0)>0$. We then seek an SDE for $\beta_t$ that holds for all times less than the stopping time $\tau$
\begin{equation}\label{defn: tau}
\tau=\inf\{s\geq 0:  \M(u_s,\beta_s)=0\}.
\end{equation}
The implicit function theorem guarantees that a unique continuous $\beta_t$ exists until this time. It is a consequence of Theorem \ref{Theorem: Bound v small} in the next section that there exists constants $C,\tilde{C} > 0$ such that
\[
\mathbb{P}\bigg( \tau \leq \exp\big(C b \epsilon^{-1}\big)\bigg) \leq \exp\big(-\tilde{C}b\epsilon^{-1}\big),
\]
where we recall that $b$ is the lower bound on the rate of decay of the Floquet exponents.

In order to derive the SDE for $\beta_t$, we apply Ito's lemma to the identity 
\begin{equation}\label{dG ti zero}
d\G_{t} := d\G(u_t,\beta_t)=0,
\end{equation}
with $du_t$ given by equation (\ref{dep2}) and $d\beta_t$ taken to satisfy an SDE of the form
\begin{equation}
\label{dbet}
d\beta_{t}=V(u_t,\beta_t) dt+ \sqrt{\epsilon}\left \langle B(u_t,\beta_t ), G(u_t)dW_t \right \rangle_{\beta_t}  ,
\end{equation}   
for functions $V$ and $B$ that we determine below. Using the definition of $\G(u_t,\beta_t,\beta_t)$, $d\G_t$ is found to be
\begin{align}
d\G_{t}
=&-2\left \langle du_t,\Phi'(\beta_t)\right \rangle_{\beta_t} +\left .\frac{\partial \G_{t}}{\partial a}\right |_{a=\beta_t}d\beta_{t}+\left . \frac{1}{2}\  \frac{\partial^2 \G_{t}}{\partial a^2}\right |_{a=\beta_t}d\beta_{t}d\beta_{t}
-2 \left \langle du_t, \Phi''(\beta_t)d\beta_{t}\right \rangle_{\beta_t}\nonumber \\
&-2\left\langle du_t,\frac{d}{da}\big[P^{-\top}(a)P^{-1}(a)\big]\big|_{a=\beta_t}\Phi'(\beta_t)\right \rangle d\beta_t. 
\label{G}
\end{align}
Note that we only include the $dt$ contributions from the quadratic differential terms involving the products $du_td\beta_{t}$ and $d\beta_{t}d\beta_{t}$, which are also known as cross-variations. 
In particular, writing $K(u_t,\beta_t) = G^{\top}(u_t) [P(\beta_t)P^{\top}(\beta_t)]^{-1}$,
\begin{equation}
\label{dbet2}
d\beta_{t}d\beta_{t}\widehat{=}\epsilon  \left \langle K(u_t,\beta_t){B}(u_t,\beta_t),QK(u_t,\beta_t){B}(u_t,\beta_t)\right \rangle dt,\end{equation}
\begin{align}
\label{du}
 \left \langle du_t,\Phi''(\beta_t)d\beta_{t}\right \rangle_{\beta_t}&\widehat{=} \sqrt{\epsilon} \left \langle G(u_t)dW_t,\Phi''(\beta_t)d\beta_{t}\right \rangle_{\beta_t} \nonumber \\
 &\widehat{=} \epsilon \left \langle G(u_t)dW_t  ,\Phi''(\beta_t) \langle B(u_t,\beta_t),G(u_t)dW_t \rangle_{\beta_t}  \right \rangle_{\beta_t}  \nonumber \\
   &\widehat{=} \epsilon \left \langle  K(u_t,\beta_t)\Phi''(\beta_t),QK(u_t,\beta_t) {B}(u_t,\beta_t)\right \rangle  dt .
\end{align}
and
\begin{multline}
\left\langle du_t,\frac{d}{da}\big[P^{-\top}(a)P^{-1}(a)\big]\big|_{a=\beta_t}\Phi'(\beta_t)\right \rangle d\beta_t \\= \epsilon\left\langle G^{\top}(u_t)\frac{d}{da}\big[P^{-\top}(a)P^{-1}(a)\big]\big|_{a=\beta_t}\Phi'(\beta_t),QK(u_t,\beta_t)B(u_t,\beta_t)\right\rangle.
\end{multline}

Substituting equations (\ref{dbet}), (\ref{dbet2}) and (\ref{du}) into equation 
(\ref{G}) yields an SDE of the form
\begin{equation}
d\G_{t}={\mathcal V}(u_t,\beta_t)dt+\sqrt{\epsilon}\langle {\mathcal B}(u_t,\beta_t),G(u_t)dW_t \rangle_{\beta_t} .
\end{equation}
In order that \eqref{dG ti zero} is satisfied, we require that both terms on the right-hand side of the above equation are zero, which will determine $V$ and $B$. First, we have
\begin{multline*}
0:=\frac{1}{2}\left \langle {\mathcal B}(u_t,\beta_t),G(u_t)dW_t\right \rangle_{\beta_t}  =\M(u_t,\beta_t)\left \langle B(u_t,\beta_t),G(u_t)dW_t \right \rangle_{\beta_t}  \\
- \left \langle G(u_t)dW_t,\Phi'(\beta_t)\right \rangle_{\beta_t}  .
\end{multline*}
Since for all times less than $\tau$, $ \M(u_t,\beta_t)  > 0$, it follows that $\M^{-1}$ exists, and hence
\begin{equation}
\label{Bi}
B(u_t,\beta_t)= \M (u_t,\beta_t)^{-1}\Phi'(\beta_t).
\end{equation}
 Second,
\begin{equation}
0 := {\mathcal V}(u_t,\beta_t)dt=\left [\left .  \frac{\partial \G_{t}}{\partial a}\right |_{a=\beta_t}V(u_t,\beta_t)-2\left (\left \langle F(u_t),\Phi'(\beta_t)\right \rangle_{\beta_t}   dt+\epsilon \kappa(u_t,\beta_t)\right )\right ]dt,
\label{V}
\end{equation}
with
\begin{multline}
\epsilon \kappa(u_t,\beta_t)dt:=-\frac{1}{4} \left . \frac{\partial^2 \G_{t}}{\partial a^2}\right |_{a=\beta_t}d\beta_{t}d\beta_{t} 
\quad +  \left \langle du_t,\Phi''(\beta_t)d\beta_{t}\right \rangle_{\beta_t}\\ + \left\langle du_t,\frac{d}{da}\big[P^{-\top}(a)P^{-1}(a)\big]\big|_{a=\beta_t}\Phi'(\beta_t)\right \rangle d\beta_t.
\label{VV}
\end{multline}
The cross-variations (\ref{dbet2}) and (\ref{du}) can now be evaluated using equation (\ref{Bi}):
\begin{align}
\label{dbet3}
d\beta_{t}d\beta_{t}&\widehat{=} \epsilon \M(u_t,\beta_t)^{-2} \left \langle K(u_t,\beta_t)\Phi'(\beta_t),QK(u_t,\beta_t) \Phi'(\beta_t)  \right \rangle  dt, \\
 \left \langle du_t,\Phi''(\beta_t)d\beta_{t}\right \rangle_{\beta_t}  &\widehat{=}\epsilon  \M(u_t,\beta_t)^{-1}\left \langle K(u_t,\beta_t)\Phi''(\beta_t),QK(u_t,\beta_t)\Phi'(\beta_t) \right \rangle  dt ,\label{du2}
 \end{align}
 and
 \begin{multline}
\left\langle du_t,\frac{d}{da}\big[P^{-\top}(a)P^{-1}(a)\big]\big|_{a=\beta_t}\Phi'(\beta_t)\right \rangle d\beta_t \\= \epsilon \mathfrak{M}(u_t,\beta_t)^{-1}\left\langle G^{\top}(u_t)\frac{d}{da}\big[P^{-\top}(a)P^{-1}(a)\big]\big|_{a=\beta_t}\Phi'(\beta_t),QK(u_t,\beta_t)\Phi'(\beta_t)\right\rangle dt.
\end{multline}
Equations (\ref{V})--(\ref{du2}) determine the drift term $V$ so that
\begin{equation}
d\beta_t = \mathfrak{M}(u_t,\beta_t)^{-1}\left [\left (\big\langle  F(u_t),\Phi'(\beta_t) \big\rangle_{\beta_t}+\epsilon \kappa(u_t,\beta_t)\right )dt + \sqrt{\epsilon}\bigg\langle G(u_t)dW_t ,\Phi'(\beta_t) \bigg\rangle_{\beta_t}\right ],
\label{bee}
\end{equation} 
where
\begin{multline}
\kappa(u_t,\beta_t) :=    \mathfrak{M}(u_t,\beta_t)^{-1}  \left \langle K(u_t,\beta_t)\Phi''(\beta_t),QK(u_t,\beta_t) \Phi'(\beta_t) \right \rangle  \\ 
+ \mathfrak{M}(u_t,\beta_t)^{-1}\left\langle G^{\top}(u_t)\frac{d}{da}\big[P^{-\top}(a)P^{-1}(a)\big]\big|_{a=\beta_t}\Phi'(\beta_t),QK(u_t,\beta_t)\Phi'(\beta_t)\right\rangle\\
+  \frac{\mathfrak{M}(u_t,\beta_t)^{-2}}{2}\left [\bigg\langle u_t - \Phi(\beta_t), \Phi'''(\beta_t) \bigg\rangle_{\beta_t}  -\bigg\langle \Phi'(\beta_t),\Phi''(\beta_t) \bigg\rangle_{\beta_t}\right. \\ 
 +\left\langle u_t-\Phi(\beta_t),\frac{d^2}{da^2}\big[P^{-\top}(a)P^{-1}(a)\big]\big|_{a=\beta_t}\Phi'(\beta_t)\right \rangle\\
+2\bigg\langle u_t-\Phi(\beta_t), \frac{d}{da}\big[P^{-\top}(a)P^{-1}(a)\big]\big|_{a=\beta_t}\Phi''(\beta_t) \bigg\rangle\\
\left .-\bigg\langle \Phi'(\beta_t), \frac{d}{da}\big[P^{-\top}(a)P^{-1}(a)\big]\big|_{a=\beta_t}\Phi'(\beta_t) \bigg\rangle 
\right ]  \left\langle K(u_t,\beta_t)\Phi'(\beta_t),QK(u_t,\beta_t) \Phi'(\beta_t) \right \rangle.
\end{multline}
Finally, recall that the amplitude term $v_t$ satisfies
\begin{equation}
\sqrt{\epsilon}v_t=u_t-\Phi_{\beta_t}.
\end{equation}
Hence, applying Ito's lemma
\begin{align}
\sqrt{\epsilon}dv_t&=du_t-\Phi'(\beta_t)d\beta_t-\frac{1}{2}\Phi''(\beta_t)d\beta_td\beta_t \nonumber \\
&=\left [F(u_t)- \mathfrak{M}(u_t,\beta_t)^{-1}\Phi'(\beta_t)\left (\big\langle  F(u_t),\Phi'(\beta_t) \big\rangle_{\beta_t}+\epsilon \kappa(u_t,\beta_t)\right )\right . \nonumber\\
&\qquad \left . -\frac{\epsilon}{2} \Phi''(\beta_t)\M(u_t,\beta_t)^{-2} \left \langle K(u_t,\beta_t)\Phi'(\beta_t),QK(u_t,\beta_t) \Phi'(\beta_t) \right \rangle\right ]dt \nonumber\\
&\qquad \qquad + \sqrt{\epsilon}\left [G(u_t)dW_t - \M(u_t,\beta_t)^{-1}\Phi'(\beta_t)\big\langle G(u_t)dW_t,\Phi'(\beta_t) \big\rangle_{\beta_t}\right ],
\label{vee}
\end{align}
where we have used equation (\ref{dep2}), and the differentials $d\Phi_t=F(\Phi_t)dt$ and
$d\Phi_{\beta_t} = \Phi' d\beta_t + \frac{1}{2}\Phi'' d\beta_t d\beta_t$.

\section{Weak noise limit}

In order to obtain a closed equation for $\beta_t$ we carry out a perturbation analysis in the weak noise limit, and compare the variational phase equation with various versions of the phase equations previously derived using isochronal phase reduction methods, see \S 1.1. We demonstrate that the linearization of our phase equation is accurate over timescales of order $\epsilon^{-1}$. This means that the timescale over which the linearization of our phase equation is accurate is of the same order as the isochronal phase equation. It should be noted that, as we explain in more detail in \S 5, our method possesses the additional virtue of having an analytic SDE that is accurate over timescales of order $O(\exp(Cb\epsilon^{-1}))$, where $b$ is the rate of decay of transverse fluctuations.

Suppose that $0 <\epsilon \ll 1$ and set $u_t=\Phi(\beta_t)$ on the right-hand side of equation (\ref{bee}). That is, we drop any $v_t$-dependent terms. Setting $\beta_t=\theta$, we obtain the explicit stochastic phase equation
\begin{equation}
\label{phase}
d\theta=[\omega_0 +\epsilon \widehat{\kappa}(\theta)]dt+\sqrt{\epsilon} \bigg \langle
G(\Phi(\theta))dW_t,{R}(\theta)\bigg\rangle,
\end{equation}
with ${R}(\theta)$ identified as the normal to the isochron crossing the limit cycle at $\theta$, see Fig. \ref{fig:var}(b) and equation (\ref{Rtan}):
\begin{equation}\label{eq: PRC again}
{R}(\theta)= [PP^{\top}(\theta)]^{-1}\Phi'(\theta),\end{equation}
since $\M(\theta)=1$.
We have used the identity
\begin{equation}
\big\langle\Phi'(\theta) \big),R(\theta)\big\rangle= 1,
\label{Fbee}
\end{equation}
and $F(\Phi(\theta))=\omega_0 \Phi'(\theta)$. Equation (\ref{phase}) has a similar form to the isochronal phase equation (\ref{cbphase3}). However, in contrast to the latter, there is no $O(\epsilon)$ contribution to the drift of the form $\bigg \langle {Z}'(\theta),Q {Z}(\theta)\bigg\rangle $ since we take the noise in SDE (\ref{dep2}) to be Ito rather than Stratonovich. Thus, the $O(\epsilon)$ drift term $\widehat{\kappa}(\theta)$ in equation (\ref{phase}) is the analog of the contributions from transverse fluctuations identified in \cite{Yoshimura08,Teramae09}.

As highlighted by Bonnin \cite{Bonnin17}, although neglecting the coupling between the phase and amplitude dynamics by setting $v_t=0$ yields a closed equation for the phase, it does lead to imprecision at short and intermediate times. (Errors at longer times due to large deviations from the limit cycle will be addressed in \S 4.) Here we show that taking into account the amplitude coupling only results in $O(\epsilon)$ contributions to the drift, not $O(\sqrt{\epsilon})$. Neglecting $v_t$-independent $O(\epsilon)$ drift terms, equation (\ref{bee}) becomes
\begin{equation}
 d\beta_t =\bigg\langle  F(u_t),\RR(u_t,\beta_t)\bigg\rangle_{\beta_t} dt+\sqrt{\epsilon}\bigg\langle G(u_t)dW_t ,\RR(u_t,\beta_t) \bigg\rangle_{\beta_t} ,\label{bee22}
\end{equation}
where
\begin{equation}
\RR(u_t,\beta_t)=\mathfrak{M}(u_t,\beta_t)^{-1}\Phi'(\beta_t) .
\end{equation}
Suppose that we rewrite $\RR$ as a function $\widehat{\RR}$ of $\beta_t$ and $v_t$ using equation (\ref{curve}): $\RR(u_t,\beta_t)=\widehat{\RR}(v_t,\beta_t)$ with
\begin{align*}
\widehat{\RR}(v_t,\beta_t)  &=\bigg(1 - \sqrt{\epsilon}\bigg\langle v_t, \Phi''(\beta_t)\bigg\rangle_{\beta_t}- \sqrt{\epsilon}\left \langle v_t, \frac{d}{da}\big[{P(a)P^{\top}(a)}^{-1}\big]\big|_{a=\beta_t}\Phi'(\beta_t)\right \rangle \bigg)^{-1}\Phi'(\beta_t) 
\end{align*}
Let us define
\begin{equation}\label{defn: H v theta}
H(v,\theta)=\left \langle F(\Phi(\theta)+\sqrt{\epsilon}v),\widehat{\RR}(v,\theta)\right \rangle_{\theta}.
\end{equation}
In the phase equation (\ref{phase}) we set $v=0$ and used $H(0,\theta)=\omega_0$. Suppose that we now include higher-order terms by Taylor expanding $H(v,\theta)$ with respect to $v$. In particular, consider the first derivative
\begin{align*}
&\frac{\partial H}{\partial v}(0,\theta) \cdot  v =   \sqrt{\epsilon}\, \mathfrak{M}^{-1}\bigg\langle J(\theta)\cdot  v,\Phi'(\theta)\bigg\rangle_{\theta}
\\
&\quad \sqrt{\epsilon}\, \mathfrak{M}^{-2}\bigg\langle F\big(\Phi(\theta)),\Phi'( \theta) \bigg\rangle_{\theta} \left [\bigg\langle  v, \Phi''(\theta) \bigg\rangle_{\theta} +\left \langle v,\left . \frac{d}{da}\big[P^{-\top}(a)P^{-1}(a)\big]\right |_{a=\theta}\Phi'(\theta)\right \rangle  \right ] \\
&=\sqrt{\epsilon}\, \bigg\langle J(\theta)\cdot  v,\Phi'(\theta) \bigg\rangle_{\theta}+\sqrt{\epsilon}\,  \omega_0\frac{d}{d\theta} \bigg\langle  v, \Phi'(\theta) \bigg\rangle_{\theta}, 
\end{align*}
since $\M(\theta)=1$ and $\bigg\langle F\big(\Phi(\theta)),\Phi'( \theta) \bigg\rangle_{\theta}=\omega_0$.
Observe that
\begin{align*}
\bigg\langle J(\theta)\cdot  v,\Phi'(\theta) \bigg\rangle_{\theta}&=\bigg\langle P^{-1}(\theta)J(\theta)\cdot  v,P^{-1}(\theta)\Phi'(\theta) \bigg\rangle \\
&=\bigg\langle J(\theta)\cdot  v,[P(\theta)P^{\top}(\theta)]^{-1}\Phi'(\theta) \bigg\rangle\\
&=\bigg\langle   v,J(\theta)^{\top}\cdot R(\theta) \bigg\rangle\\
&=-\omega_0 \bigg\langle   v, R'(\theta) \bigg\rangle\\
&=-\omega_0 \bigg\langle v, \frac{d}{d\theta}\bigg\lbrace\big[P^{-\top}(\theta)P^{-1}(\theta)\big]\Phi'(\theta)\bigg\rbrace\bigg\rangle \\
&=- \omega_0\frac{d}{d\theta} \bigg\langle  v, \Phi'(\theta) \bigg\rangle_{\theta},
\end{align*}
where in the third last line we have used \eqref{adj}, and in the second last line we have used \eqref{eq: PRC again}.

We have thus proven that the phase equation decouples from the amplitude equation at $O(\sqrt{\epsilon})$, which is consistent with the analysis of \cite{Bonnin17}. Since the errors in the SDE are of $O(\epsilon)$, this linearization of our phase equation is accurate over timescales of order $O(\epsilon^{-1})$, which is the same order as the isochronal phase equation.

\section{Explicit bounds on the growth of the weighted amplitude $\norm{u_t-\Phi(\beta_t)}_{\beta_t}$}

In this section we obtain powerful bounds on how long it takes the weighted amplitude of the orthogonal fluctuations, $\norm{u_t-\Phi(\beta_t)}_{\beta_t}$ to exceed some value $a$. These bounds are valid for $\norm{u_t-\Phi(\beta_t)}_{\beta_t} = o(b)$, where $b$ is the magnitude of the decay of transverse fluctuations, and are useful in a variety of situations. One situation is in the limit of small noise as $\epsilon \to 0$.  Another situation where these bounds are useful is in the regime of finite noise (so we do not take $\epsilon \to 0$), but a large decay of fluctuations that are transverse to the limit cycle (i.e. large $b$) \cite{Teramae09,Newby14}. These bounds are more powerful and flexible than classical large deviations bounds, because both the neighborhood $[0,a]$ and the time interval $T$ can vary with $\epsilon$ and $b$. The relative simplicity of the proof of this theorem provides further justification for the phase decomposition outlined in the first half of this paper. It results in a uniform lower bound for the decay of the transformed drift $w_t = P\big(\beta_t\big)^{-1}v_t$, which means that after a rescaling of time using the Dambins-Dubins-Schwarz theorem \cite{revuz2013continuous}, it becomes straightforward to demonstrate that the amplitude term behaves like a stable Ornstein-Uhlenbeck Process. This theorem can also be used to bound the probability of the stopping time $\tau$ (defined in \eqref{defn: tau}) exceeding a certain value.

 The following bounds are expressed in terms of the first hitting time of the scalar Ornstein-Uhlenbeck Process, which we restate here. Let $p^{(-b)}_{x,a}(t)$ be the density for the first hitting time of the Ornstein Uhlenbeck process with drift gradient $-b$ started at $x$. More precisely, if 
\begin{align}
dY_t =& -b Y_t dt + dW_t,\quad 
Y_0 = x,
\end{align}
for a one-dimensional Brownian Motion $W$, then 
\begin{equation}
\mathbb{P}\bigg( \inf\lbrace s>0 : Y_s = \kappa\rbrace \in [t,t+dt] \bigg) := p^{(-b)}_{x,\kappa}(t)dt.
\end{equation}
Let $I(\epsilon,b) \subset \mathbb{R}^+$ be the following closed interval
\begin{multline}\label{eq: a range}
I(\epsilon,b) = \bigg\lbrace a \in \mathbb{R}^+:C_1 \epsilon + C_2 a^2 \leq \frac{ba}{2} \text{ and }\\
a \leq \frac{1}{2}\bigg(\sup_{\alpha \in [0,2\pi]}\norm{\Phi''(\alpha) - \frac{d}{d\theta}\big[P(\theta)P^{\top}(\theta)\big]\big|_{\theta=\alpha}[P(\alpha)P^{\top}(\alpha)]^{-1}\Phi'\big(\alpha\big)}_{\alpha}\bigg)^{-1}\bigg\rbrace,
\end{multline}
where $C_1$ and $C_2$ are positive constants (independent of $\epsilon$ and $b$) that are specified in Lemma \ref{Lemma Bound gamma2}. The second condition in the above definition is to ensure that the SDE for $\beta_t$ is well-defined as long as $\norm{u_t - \Phi(\beta_t)}_{\beta_t} \in I(\epsilon,b)$.

The following theorem obtains bounds on how long it takes $\norm{u_t-\Phi(\beta_t)}_{\beta_t}$ to attain any $a$ in $I(\epsilon,b)$. The theorem is most useful in the regime $a\in \big( O\big(\sqrt{\frac{\epsilon}{b}}\big), O(b)\big)$. It is not very useful for values of $a$ towards the lower end of $I(\epsilon,b)$, since $\norm{u_t-\Phi(\beta_t)}_{\beta_t}$ will very quickly attain $O\big(\sqrt{\frac{\epsilon}{b}}\big)$, since in this regime the fluctuations of the noise dominate the $-b$ decay resulting from the stability of the deterministic dynamics. 

Recall that $\tau$ (defined in \eqref{defn: tau}) is the stopping time such that the SDE for the phase in \S 2 is well-defined for all $t \leq \tau$.
\begin{theorem}\label{Theorem: Bound v small}
For all $a \in I(\epsilon,b)$, if 
\begin{equation}
\sup_{s\in [0,T]}\norm{u_s -\Phi(\beta_s)}_{\beta_s} \leq a,
\end{equation}
then $T\leq \tau$. Furthermore, if $ \norm{u_0- \Phi(\beta_0)}_{\beta_0} := x < \frac{a}{2}$, then 
\begin{equation}
\mathbb{P}\bigg(\sup_{s\in [0,T]}\norm{u_s -\Phi(\beta_s)}_{\beta_s} \geq a \bigg) \leq \int_0^T p^{(-b)}_{\bar{x},\bar{a}}(u)du,
\end{equation}
where $\bar{x} = {x}/{ \sqrt{\lambda \epsilon}}$ and $\bar{a} ={a}/{2\sqrt{\lambda\epsilon}}$, and $\lambda $ is a positive constant that is given in \eqref{defn lambda}. Note that $\lambda$ is determined by $\Pi$, $G$ and $Q$.
\end{theorem}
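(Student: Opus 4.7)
The plan is to work in Floquet coordinates, use the variational constraint to eliminate the neutral direction, and then reduce to a scalar Ornstein--Uhlenbeck comparison via the Dambins--Dubins--Schwarz theorem. I will introduce
\[
y_t := P^{-1}(\beta_t)\bigl(u_t-\Phi(\beta_t)\bigr),
\]
so that $\norm{y_t}=\norm{u_t-\Phi(\beta_t)}_{\beta_t}$. The variational identity $\G(u_t,\beta_t)=0$ together with $P^{-1}(\theta)\Phi'(\theta)=\mathbf{e}$ will force $\langle y_t,\mathbf{e}\rangle = 0$, so $y_t$ lies in $\mathrm{span}\{\mathbf{e}_2,\ldots,\mathbf{e}_d\}$, on which $\langle y,\mathcal{S}y\rangle\leq -b\norm{y}^2$. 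The first claim of the theorem ($T\leq\tau$) will follow directly from the hypothesis $\norm{u_s-\Phi(\beta_s)}_{\beta_s}\leq a$ by applying Cauchy--Schwarz to the two correction terms in \eqref{curve}: the second bound in the definition of $I(\epsilon,b)$ then gives $\M(u_s,\beta_s)\geq 1/2$ on $[0,T]$, so $\tau>T$.

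Next I will derive the SDE for $y_t$ by applying Ito's lemma to $P^{-1}(\beta_t)$ using \eqref{bee} and combining with \eqref{vee}. The crucial algebraic observation is the Floquet identity $\omega_0(P^{-1})'(\theta) = -P^{-1}(\theta)J(\theta)+\mathcal{S}P^{-1}(\theta)$, which causes the linear-in-$v_t$ contribution $P^{-1}(\beta_t)J(\beta_t)v_t$ arising from the expansion of $F(u_t)$ around $\Phi(\beta_t)$ to cancel against $\omega_0(P^{-1})'(\beta_t)v_t$ coming from the $\omega_0\,dt$ leading-order drift of $\beta_t$. All higher-order terms --- quadratic corrections in $v_t$, the $O(\epsilon)$ Ito corrections, and the $\kappa$ and cross-variation contributions --- will collect into a drift $\gamma_t$ satisfying $\norm{\gamma_t}\leq C_1\epsilon + C_2\norm{y_t}^2$, which is essentially the content of Lemma \ref{Lemma Bound gamma2}. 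The result will take the form
\[
dy_t = \mathcal{S}y_t\,dt + \gamma_t\,dt + \sqrt{\epsilon}\,\Xi(u_t,\beta_t)\,dW_t, \qquad \Xi(u,\beta):=P^{-1}(\beta)G(u),
\]
with $\Xi$ uniformly bounded by boundedness of $G$ and continuity/periodicity of $P^{-1}$.

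Then I will apply Ito to $\norm{y_t}^2$. Using the contraction bound together with $\lambda:=\sup_{u,\beta}\mathrm{Tr}(\Xi(u,\beta)Q\Xi(u,\beta)^\top)$, this yields
\[
d\norm{y_t}^2 \;\leq\; \bigl(-2b\norm{y_t}^2+2\norm{y_t}(C_1\epsilon+C_2\norm{y_t}^2)+\lambda\epsilon\bigr)dt + 2\sqrt{\epsilon}\langle y_t,\Xi(u_t,\beta_t)\,dW_t\rangle.
\]
Restricting to the event $\{\sup_{s\leq t}\norm{y_s}\leq a\}$, the first inequality defining $I(\epsilon,b)$ absorbs the $\gamma_t$ contribution into at most $b\norm{y_t}^2\,dt$, so the drift is dominated by $-b\norm{y_t}^2+\lambda\epsilon$. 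The martingale has quadratic variation rate at most $4\epsilon\lambda\norm{y_t}^2$. After dividing by $\lambda\epsilon$ and invoking Dambins--Dubins--Schwarz to rewrite the martingale as a Brownian motion in its own clock, a standard scalar stochastic comparison will dominate $\norm{y_t}/\sqrt{\lambda\epsilon}$ pathwise by a one-dimensional Ornstein--Uhlenbeck process with drift $-b$ and unit diffusion started from $\bar{x}=x/\sqrt{\lambda\epsilon}$; the factor $1/2$ in $\bar{a}=a/(2\sqrt{\lambda\epsilon})$ then absorbs the technical passage between the squared comparison and the norm, and the hitting-time density $p^{(-b)}_{\bar{x},\bar{a}}$ yields the stated bound.

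The hard part will be the second step: systematically assembling all the cross-variations and Ito corrections in $dy_t$ and verifying the Floquet cancellation that produces the clean contractive drift $\mathcal{S}y_t$. A secondary subtlety is the scalar Ornstein--Uhlenbeck comparison itself, because the quadratic variation of the martingale driving $\norm{y_t}^2$ is state-dependent; this is precisely why Dambins--Dubins--Schwarz is essential before the comparison can be read off from the known OU hitting-time density.
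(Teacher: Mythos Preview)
Your plan follows the same route as the paper --- Floquet change of variables $w_t=P^{-1}(\beta_t)(u_t-\Phi(\beta_t))$, the cancellation producing the contractive drift $\mathcal{S}w_t$, the bound of Lemma~\ref{Lemma Bound gamma2} on the residual, and a Dambis--Dubins--Schwarz reduction to a scalar OU hitting-time. The first part of the theorem via Cauchy--Schwarz on \eqref{curve} is exactly right.

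There is, however, a genuine gap in your comparison step. You claim that on $\{\sup_{s\le t}\norm{y_s}\le a\}$ the first inequality in \eqref{eq: a range} lets you absorb $2\norm{y_t}\,\norm{\gamma_t}$ into $b\norm{y_t}^2$, yielding drift $\le -b\norm{y_t}^2+\lambda\epsilon$. But $C_1\epsilon+C_2a^2\le ba/2$ only gives the \emph{constant} bound $\norm{\gamma_t}\le ba/2$; it does not give $\norm{\gamma_t}\le b\norm{y_t}/2$, and the latter fails when $\norm{y_t}$ is near zero (which it typically is, since $\norm{y_t}=O(\sqrt{\epsilon/b})$). So at the level of $\norm{y_t}^2$ the drift you can actually justify is $\le -2b\norm{y_t}^2+ba\norm{y_t}+\lambda\epsilon$, which is not an OU drift. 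The paper resolves this by first applying It\^o to the square root, obtaining an SDE for $\norm{w_t}$ with drift $\le -b\norm{w_t}+ba/2$ and diffusion $\sqrt{\epsilon}\,\norm{w_t}^{-1}\langle w_t,\bar{G}\,dW_t\rangle$ whose quadratic-variation rate is bounded by the constant $\lambda\epsilon$. It then sets $\exp(bt)\norm{w_t}$, integrates, and applies DDS to the resulting stochastic integral $\zeta_s=\sqrt{\epsilon}\int_0^s\norm{w_r}^{-1}e^{br}\langle w_r,\bar{G}\,dW_r\rangle$, comparing its time-change to that of the explicit OU process $Z_t=e^{-bt}x+\sqrt{\lambda\epsilon}\int_0^t e^{b(s-t)}dW_s$. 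The additive $ba/2$ in the drift of $\norm{w_t}$ is precisely the origin of the factor $1/2$ in $\bar a=a/(2\sqrt{\lambda\epsilon})$; it is not a slack constant that ``absorbs the technical passage between the squared comparison and the norm.''

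Two smaller points. First, the diffusion in the $y_t$-equation is not simply $P^{-1}(\beta)G(u)$: the amplitude SDE \eqref{vee} already carries the projection correction $-\M^{-1}\Phi'\langle G\,dW_t,\Phi'\rangle_{\beta_t}$, and the chain rule $d(P^{-1}(\beta_t)v_t)$ picks up further diffusion from $(P^{-1})'(\beta_t)\,d\beta_t$; the paper's $\bar G$ collects all of these. For upper bounds this only changes constants, but your identification of $\lambda$ with $\sup\mathrm{Tr}(\Xi Q\Xi^\top)$ (an It\^o-correction quantity) is not the $\lambda$ in the theorem, which is the spectral bound $\lambda_{G,P}^2\lambda_Q$ controlling the quadratic variation of the scalar martingale. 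Second, your observation $\langle y_t,\mathbf e\rangle=0$ is correct and is exactly what the paper uses (implicitly) to get $\langle w_t,\mathcal S w_t\rangle\le -b\norm{w_t}^2$.
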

\begin{rem}
To facilitate the exposition, we have chosen $\bar{a} = {a}/{2\sqrt{\lambda\epsilon}}$. In fact, we could have chosen $\bar{a} = {\rho a}/{\sqrt{\lambda \epsilon}}$,  for any $\rho \in (0,1)$, and the bound would still hold in the limit $\epsilon / b \to 0$.
\end{rem}

\begin{rem}
We can use classical results on the first hitting time of the Ornstein-Uhlenbeck process to derive the leading order asymptotics of the above \cite{Ricciardi88,Alili05}. To leading order, as  $ {b}/{\epsilon} \to \infty$ , 
\begin{equation}
p^{(-b)}_{0,\bar{a}}(t) \simeq bg\bigg(\frac{a^2b}{4\lambda\epsilon} \bigg)\exp\bigg\lbrace - btg\bigg(\frac{a^2b}{4\lambda\epsilon} \bigg) \bigg\rbrace,
\end{equation}
where $g(z) = \frac{\sqrt{z}}{\sqrt{2\pi}}\exp\big\lbrace - \frac{z}{2}\big\rbrace$. We find that for $a\in I(\epsilon,b)$,
if $T = o\big(g\big\lbrace\frac{a^2b}{4\lambda\epsilon} \big\rbrace^{-1} \big)$, then $\mathbb{P}\big(\sup_{s\in [0,T]}\norm{u_s -\Phi(\beta_s)}_{\beta_s} \geq a \big)\simeq Tg\big\lbrace\frac{a^2b}{4\lambda\epsilon} \big\rbrace  \ll 1$. There are much more refined estimates in the literature: note in particular the exact analytic expression in \cite[Theorem 3.1]{Alili05}. 
\end{rem}

\section{Discussion and Future Work}

In summary, the variational approach developed in this paper determines the phase of a stochastic oscillator by requiring it to minimize a weighted norm. We have demonstrated that to leading order, the phase separates from the amplitude and agrees with the isochronal phase. Hence, the linearization of our phase dynamics is accurate over timescales of $O(\epsilon^{-1})$, which is the same order of accuracy as the isochronal phase equation. In addition, our exact phase equation \eqref{bee} is accurate over much longer timescales of order $O\big(\exp(Cb\epsilon^{-1})\big)$, recalling that $b$ is the rate of decay of transverse fluctuations. There exists a precise analytic expression for the phase SDE, as well as a stopping time $\tau$ up to which this SDE applies. Furthermore, one can immediately determine the phase from any particular realization of the fundamental SDE using \eqref{eq: minimum G 2} (as long as one takes the phase to be the global minimum). This is an advantage of our method compared to the isochronal method, since in most cases there does not exist an analytic solution for the isochronal method, and it is difficult to implement in a computationally efficient way \cite{Ashwin16}. 

The phase SDE \eqref{bee} is thus very well-suited to studying the long-time dynamics of the phase on timescales of $O\big(\exp(Cb\epsilon^{-1})\big)$. In \S 4 we obtained powerful bounds on the probability of the oscillator leaving any particular neighborhood of the oscillator over any particular timescale. These bounds are very flexible, because they shed light on the mutual scaling of the amplitude of the noise, rate of decay of transverse fluctuations, the size of the neighborhood of the limit cycle and the time the oscillator spends in this neighborhood.

In forthcoming work, we will use the phase SDE of this paper to study the synchronization of uncoupled oscillators subject to common noise. In particular, we will obtain precise bounds on the probability of two synchronized oscillators desynchronizing, and conditions under which two oscillators never desynchronize. Another interesting application of the phase SDE of this paper would be the effect of finite noise on oscillators with a strong decay of transverse fluctuations \cite{Newby14}.

\newpage

\begin{appendix}
\section{Proof of Theorem \ref{Theorem: Bound v small}}

\begin{proof}
We start with the first part of the theorem. From \eqref{curve},
\begin{align*}
\M(z,\theta)=& 1 - \bigg\langle z-\Phi(\theta), \Phi''(\theta)\bigg\rangle_\theta - \left\langle z-\Phi(\theta),P(\theta)P^{\top}(\theta)\frac{d}{d\theta}\big[{P(\theta)P^{\top}(\theta)}\big]^{-1}\Phi'(\theta)\right \rangle_{\theta} \\
=& 1 - \left\langle z-\Phi(\theta), \Phi''(\theta) - \frac{d}{d\theta}\big[{P(\theta)P^{\top}(\theta)}\big]\big[{P(\theta)P^{\top}(\theta)}\big]^{{-1}}\Phi'(\theta)\right \rangle_{\theta},
\end{align*}
and through an application of the Cauchy-Schwarz Inequality to the above, it may be observed that
\[
\M(u_t,\beta_t) \geq 1 - \norm{u_t-\Phi(\beta_t)}_{\beta_t}\norm{\Phi''(\beta_t) - \frac{d}{d\theta}\big[P(\theta)P^{\top}(\theta)\big]\big|_{\theta=\beta_t}P^{-\top}(\beta_t)P^{-1}(\beta_t)\Phi'\big(\beta_t\big)}_{\beta_t}.
\]
It then follows from the definition of $I(\epsilon,b)$ that if $\sup_{s\in [0,t]}\norm{u_s -\Phi(\beta_s)}_{\beta_s} \leq a$, for $a\in I(\epsilon,b)$, then $\M(u_s,\beta_s) \geq \frac{1}{2}$ for all $s\in [0,t]$, and therefore
\begin{equation}\label{eq: M bounded}
\sup_{s\in [0,t]}\M(u_s,\beta_s)^{-1} \leq 2.
\end{equation}
This means that 
\begin{equation}\label{eq: inequality tau}
\tau \geq \inf\big\lbrace s\geq 0: \norm{u_s - \Phi(\beta_s)}_{\beta_s} = a\big\rbrace.
\end{equation}
In other words, the SDE for the phase $\beta_t$ that we derived in \S 2 is well-defined as long as $\norm{u_t -\Phi(\beta_t)}_{\beta_t} \leq a$.

We now prove the second part of the theorem.
Recall that the amplitude term satisfies equation (\ref{vee}). In the following it is convenient to perform the rescaling $\sqrt{\epsilon} v_t\rightarrow v_t$. \begin{align}\label{eq: first vt}
dv_t=\left [J\big(\beta_t\big)v_t + \gamma_0(u_t,\beta_t)\right ]dt + \sqrt{\epsilon} \tilde{G}(u_t,\beta_t)dW_t 
\end{align}
where
\begin{multline}\label{eq:gamma0}
\gamma_0(u_t,\beta_t) = F(u_t)-J(\beta_t)v_t- \mathfrak{M}(u_t,\beta_t)^{-1}\Phi'(\beta_t)\left (\big\langle  F(u_t),\Phi'(\beta_t) \big\rangle_{\beta_t}+\epsilon \kappa(u_t,\beta_t)\right ) \\
 -\frac{\epsilon}{2} \Phi''(\beta_t)\M(u_t,\beta_t)^{-2} \left \langle K(u_t,\beta_t)\Phi'(\beta_t),QK(u_t,\beta_t) \Phi'(\beta_t) \right \rangle
\end{multline}
and $\tilde{G}(u_t,\beta_t) \in \mathbb{R}^{d\times d}$ is given by
\[
\tilde{G}(u_t,\beta_t) = G(u_t) -   \M(u_t,\beta_t)^{-1} \Phi'(\beta_t)\Phi'(\beta_t)^{\top}P^{-\top}\big(\beta_t\big)P^{-1}\big(\beta_t\big)G(u_t).
\]
We now perform the change of variable $w_t = P\big(\beta_t\big)^{-1}v_t$, since $\norm{v_t}_{\beta_t} = \norm{w_t}$. Using Ito's Lemma, we find that
\begin{multline}
dw_t = -P\big(\beta_t\big)^{-1}P'\big(\beta_t \big)w_t d\beta_t + P\big(\beta_t\big)^{-1}dv_t - P\big(\beta_t\big)^{-1}P'\big(\beta_t \big)P\big(\beta_t\big)^{-1}dv_t d\beta_t .
\end{multline}
As will be seen further below, the reason for this change of variable is that the drift of $w_t$ decays uniformly (to leading order), so that the leading order behavior of the SDE is like a stable Ornstein-Uhlenbeck process. We now demonstrate this. Recall from \eqref{eq: derivative P 0} that the derivative of $P(t)$ satisfies
\begin{equation}\label{eq: derivative P}
\omega_0 P'(t) = J(t)P(t)- P(t)\mathcal{S}
\end{equation}
This means that
\begin{multline*}
dw_t = \omega_0^{-1}\big(-P\big(\beta_t\big)^{-1}J(\beta_t)v_t +\mathcal{S}w_t\big)d\beta_t + P\big(\beta_t\big)^{-1}dv_t - P\big(\beta_t\big)^{-1}P'\big(\beta_t \big)P\big(\beta_t\big)^{-1}dv_t d\beta_t,
\end{multline*}
and therefore
\begin{multline}\label{wee0}
dw_t = \big[\mathcal{S}w_t + \gamma(u_t,\beta_t) \big]dt+\sqrt{\epsilon} P^{-1}(\beta_t)\tilde{G}(u_t,\beta_t)dW_t \\ + \sqrt{\epsilon}\omega_0^{-1}\mathfrak{M}^{-1}(u_t,\beta_t) \big\lbrace-P\big(\beta_t\big)^{-1}J(\beta_t)v_t +\mathcal{S}w_t\big\rbrace\Phi'(\beta_t)^{\top}P^{-\top}(\beta_t)P^{-1}(\beta_t)G(u_t) dW_t, 
\end{multline}
where
\begin{multline}
\gamma(u_t,\beta_t) = P(\beta_t)^{-1}\big(J(\beta_t)v_t + \gamma_0(u_t,\beta_t)\big) -\mathcal{S}w_t \\
+\omega_0^{-1} \big(-P\big(\beta_t\big)^{-1}J(\beta_t)v_t +\mathcal{S}w_t\big)\mathfrak{M}(u_t,\beta_t)^{-1}\left( \big\langle  F(u_t),\Phi'(\beta_t) \big\rangle_{\beta_t}+\epsilon \kappa(u_t,\beta_t)\right ) \\
- \epsilon\mathfrak{M}^{-1}(u_t,\beta_t) P\big(\beta_t\big)^{-1}P'\big(\beta_t \big)P\big(\beta_t\big)^{-1}\tilde{G}\big(u_t,\beta_t\big)Q G^{\top}(u_t)P^{-\top}(\beta_t)P^{-1}(\beta_t)\Phi'(\beta_t).
\end{multline}
and we have used the fact that 
\begin{align*}
d\beta_t =& \sqrt{\epsilon}\mathfrak{M}^{-1}(u_t,\beta_t)\big\langle P^{-1}(\beta_t)\Phi'(\beta_t),P^{-1}(\beta_t)G(u_t)dW_t\big\rangle + F.V.T\\
=& \sqrt{\epsilon}\mathfrak{M}^{-1}(u_t,\beta_t) \Phi'(\beta_t)^{\top} P^{-\top}(\beta_t)P^{-1}(\beta_t)G(u_t)dW_t + F.V.T,
\end{align*}
where $F.V.T$ stands for `finite variation terms' (i.e. the drift terms). We write this as
\begin{equation}\label{wee}
dw_t = \bigg[\omega_0\mathcal{S}w_t + \gamma(u_t,\beta_t) \bigg]dt+\sqrt{\epsilon} \bar{G}(u_t,\beta_t)dW_t,
\end{equation}
where $\bar{G}(u_t,\beta_t)$ can be inferred from \eqref{wee0}.

Since the map $w \to \norm{w}^2$ is twice differentiable, we can apply Ito's Lemma to equation (\ref{wee}). We find that
\begin{multline}
d\norm{w_t}^2 =\bigg[2 \big\langle w_t,\mathcal{S}w_t +\gamma(u_t,\beta_t,t)\big\rangle +\epsilon \rm{tr}\big\lbrace  \bar{G}(u_t,\beta_t)Q\bar{G}^{\top}(u_t,\beta_t) \big\rbrace \bigg]dt \\+2\sqrt{\epsilon}\bigg\langle w_t,\bar{G}(u_t,\beta_t)dW_t \bigg\rangle .
 \end{multline}
 It follows from the stability assumption at the start of this paper that $\big\langle w_t,\mathcal{S}w_t\big\rangle \leq - b\norm{w_t}^2$, which means that
\begin{equation}
d\norm{w_t}^2 \leq  \big[-2b\norm{w_t}^2 +2\gamma_2(u_t,\beta_t) \big]dt + 2\sqrt{\epsilon} \langle w_t, \bar{G}(u_t)dW_t\rangle ,\label{eq SDE vt beta_t}
 \end{equation}
 where 
 \[
\gamma_2(u_t,\beta_t,t) = \langle w_t , \gamma(u_t,\beta_t)\rangle +\frac{\epsilon}{2} \rm{tr}\big\lbrace  \bar{G}(u_t,\beta_t)Q\bar{G}^{\top}(u_t,\beta_t) \big\rbrace .
 \]

Define the stopping time 
\begin{equation} 
\hat{\tau}_{a} = \inf\bigg\lbrace s \leq \tau :\norm{w_s}^{-1}\gamma_2\big(u_s,\beta_s\big) = ba/2\bigg\rbrace,
\end{equation}
recalling that $\tau$ (defined in \eqref{defn: tau}) is the stopping time for which the SDE for $\beta_t$ is well-defined).

We determine an SDE for $\norm{w_t}$ by applying Ito's Lemma to the square root function, finding that for all times $t \leq \hat{\tau}_a$
 \begin{multline}
d\norm{w_t} \leq \sqrt{\epsilon}\norm{w_t}^{-1} \big\langle w_t,\bar{G}(u_t)dW_t \big\rangle \\ + \bigg(-b \norm{w_t}+ \norm{w_t}^{-1}\gamma_2(u_t,\beta_t,t)- \frac{\epsilon}{4\norm{w_t}^{3}}\big\langle Q\bar{G}^{\top}(u_t)w_t,\bar{G}^{\top}(u_t) w_t  \big\rangle \bigg) dt\\
\leq \sqrt{\epsilon}\norm{w_t}^{-1} \big\langle w_t,\bar{G}(u_t)dW_t \big\rangle + \bigg(-b \norm{w_t}+ \norm{w_t}^{-1}\gamma_2(u_t,\beta_t,t) \bigg) dt,\label{eq SDE vt beta_t 5}
 \end{multline}
since  $\frac{\epsilon}{4\norm{w_t}^{3}}\big\langle Q\bar{G}^{\top}(u_t) w_t,\bar{G}^{\top}(u_t) w_t  \big\rangle \geq 0$, because the covariance matrix $Q$ is positive semi-definite. Note that the coefficients of the above SDE are continuous and bounded in a sufficiently small neighborhood of $\norm{w_t} = 0$. This is true for $\norm{w_t}^{-1}\gamma_2$ thanks to the inequality in Lemma \ref{Lemma Bound gamma2}, and it is true for the diffusion term thanks to the Cauchy-Schwarz Inequality (this will be clear in the following).

 Now define $y_t = \exp\big(b t\big)\norm{w_t}$. Through Ito's Lemma, we find that
\[
dy_t = by_t dt +\exp\big(bt\big)d\norm{w_t},
\]
and therefore for all times $t \leq \hat{\tau}_a$,
\begin{multline*}
dy_t  \leq  \exp\big(bt\big)\bigg\lbrace b \norm{w_t}-b \norm{w_t}+ \norm{w_t}^{-1}\gamma_2(u_t,\beta_t,t) \bigg\rbrace dt\\ +\sqrt{\epsilon} \norm{w_t}^{-1} \exp\big(bt\big)\big\langle w_t,\bar{G}(u_t,\beta_t)dW_t\big\rangle.
\end{multline*}
We integrate the above expression, before dividing both sides by $\exp(bt)$, and find that
\begin{align*}
\norm{w_{t\wedge \hat{\tau}_a}} \leq  \exp\big\lbrace-b(t\wedge \hat{\tau}_a)\big\rbrace\norm{w_0} +\sqrt{\epsilon} \int_0^{t\wedge \hat{\tau}_a}\exp\big(b(s-t\wedge \hat{\tau}_a) \big)\norm{w_s}^{-1}\big\langle w_s,\bar{G}(u_s)dW_s\big\rangle\\+ \int_0^{t\wedge \hat{\tau}_a} \exp\big(b(s-t\wedge \hat{\tau}_a) \big)\norm{w_s}^{-1}\gamma_2(u_s,\beta_s) ds.
\end{align*}
This means that
\begin{multline}
\norm{w_{t\wedge \hat{\tau}_a}} \leq  \exp\big\lbrace-b(t\wedge \hat{\tau}_a) \big\rbrace\norm{w_0} + \frac{1}{b}\sup_{s\in [0,t\wedge \hat{\tau}_a]}\norm{w_s}^{-1}\big|\gamma_2(u_s,\beta_s)|\\ +\sqrt{\epsilon} \int_0^{t\wedge \hat{\tau}_a}\norm{w_s}^{-1}\exp\big(b(s-t\wedge \hat{\tau}_a) \big)\big\langle w_s,\bar{G}(u_s,\beta_s)dW_s\big\rangle\\
 \leq  \exp\big\lbrace-b(t\wedge \hat{\tau}_a) \big\rbrace\norm{w_0} + \frac{a}{2} +\sqrt{\epsilon} \int_0^{t\wedge \hat{\tau}_a}\norm{w_s}^{-1}\exp\big\lbrace b(s-t\wedge \hat{\tau}_a) \big\rbrace\big\langle w_s,\bar{G}(u_s,\beta_s)dW_s\big\rangle,
\label{eq: first bound norm vt}
\end{multline}
using the definition of $\hat{\tau}_a$.

Define the stopping time
\begin{equation}
\tau_{a,x} = \inf\big\lbrace \hat{\tau}_{a},\grave{\tau}_{a,x}\big\rbrace
\end{equation}
where
\begin{align}
\grave{\tau}_{a,x} =&  \inf\bigg\lbrace s \geq 0 :x\exp\big(-bs\big)+\sqrt{\epsilon}\int_0^s\norm{w_t}^{-1}\exp\big(b(t-s) \big)\big\langle w_t,\bar{G}(u_t)dW_t\big\rangle = a/2\bigg\rbrace
\end{align} 
It follows from \eqref{eq: first bound norm vt} that for all $s\in [0,\tau_{a,x}]$,
\begin{equation}
\norm{w_s} \leq a.\label{eq: bound norm vs}
\end{equation}
This means that
\begin{multline}\label{eq: list probabilities}
\mathbb{P}\big( \tau_{a,x} \leq T\big) \leq
\mathbb{P}\bigg(\text{There exists }s\in [0,T]\; \text{ such that either }\;\zeta_s- x \geq \exp(bs)\frac{a}{2}\\ \text{ or } \bigg|\frac{1}{\norm{w_s}}\gamma_2(u_s,\beta_s)\bigg| =ba/2\text{, and }\sup_{r\in [0,s]}\norm{w_r} \leq a \bigg) \\
\leq \mathbb{P}\bigg(\text{There exists }s\in [0,T]\; \text{ such that }\;\zeta_s- x \geq \exp(bs)a/2\bigg)\\ +
 \mathbb{P}\bigg(\text{There exists }s\in [0,T] \text{ such that }\big| \gamma_2(u_s,\beta_s) \big| =ba/2\\ \text{ or }\tau \leq T\tau\text{, and }\sup_{r\in [0,s]}\norm{w_r} \leq a\bigg),
 \end{multline}
where $\zeta_s = \sqrt{\epsilon}\int_0^s\norm{w_t}^{-1}\exp\big(bt \big)\big\langle w_t,\bar{G}(u_t)dW_t\big\rangle$. 

Now it follows from \eqref{eq: inequality tau} that 
\[
\mathbb{P}\bigg(\tau \leq T\text{ and }\sup_{s\in [0,T]}\norm{w_s} \leq a\bigg) = 0.
\]
Furthermore, it follows from Lemma \ref{Lemma Bound gamma2} that 
\begin{align*}
 \mathbb{P}\bigg(\text{There exists }s\in [0,\tau] \text{ such that }\big| \norm{w_s}^{-1}\gamma_2(u_s,\beta_s) \big)\big| =ba/2 \text{ and }\sup_{t\in [0,s]}\norm{w_t} \leq a\bigg) \\
\leq \mathbb{P}\bigg(\text{There exists }s\in [0,\tau] \text{ such that }C_1 \epsilon + C_2 \norm{w_s}^2 =ba/2 \text{ and }\sup_{t\in [0,s]}\norm{w_t} \leq a\bigg) \\= 0,
 \end{align*} 
 thanks to the fact that $a\in I(\epsilon,b)$, which we recall is defined in \eqref{eq: a range}.
It therefore remains for us to prove that
\begin{multline}\label{eq: to prove, final theorem}
\mathbb{P}\bigg(\text{There exists }s\in [0,T]\; \text{such that}\;\zeta_s- x \geq \exp(bs)\frac{a}{2} \bigg) \leq \int_0^T p^{(-b)}_{\bar{x},\bar{a}}(y)dy,
\end{multline}
recalling that $\bar{a} =  {a}/{2\sqrt{\lambda \epsilon}}$ and $\bar{x} ={x}/{\sqrt{\lambda \epsilon}}$.

By the Dambis Dubins-Schwarz Theorem \cite[Theorem 1.6, Page 181]{revuz2013continuous}, $X_t := \zeta_{\iota_t}$ is Brownian, where 
\begin{align}
\iota_s =& \inf\big\lbrace r\geq 0:\eta_r \geq s \big\rbrace \text{ and } \\
\eta_r :=& \epsilon\int_0^r \norm{w_s}^{-2}  \exp\big(bs\big)\big\langle \bar{G}^{\top}(u_s)w_s,Q\bar{G}^{\top}(u_s)w_s\big\rangle ds.
\end{align}
Let $\lambda_{G,P}$ be an upper bound for $\norm{\bar{G}(u_t,\beta_t)}$ (the spectral norm), that is uniform over all $\beta_t \in \mathbb{R}$ and $u_t \in \mathbb{R}^d$, recalling the implicit definition of $\bar{G}$ in \eqref{wee}. Such an upper bound exists, because by assumption $\norm{G(u_t)}$ possesses a uniform upper bound. Similarly $P^{-1}(\beta_t)$ and $J(\beta_t)$ possess uniform upper bounds because they are continuous and $2\pi$ periodic. Since $\bar{G}(u_t,\beta_t)$ is equal to sums and multiplications of matrices with uniform upper bounds, it must also possess a uniform upper bound. It follows that
\begin{align}
\big\langle \bar{G}^{\top}(u_s)w_s,Q\bar{G}^{\top}(u_s)w_s\big\rangle
=&\big\langle w_s,\bar{G}(u_s)Q\bar{G}^{\top}(u_s)w_s\big\rangle\nonumber \\
\leq & \lambda \norm{w_s}^2,\label{defn lambda}
\end{align}
where $\lambda = \lambda_{G,P}^2 \lambda_Q$. We find that $\eta_r \leq \bar{\eta}_r :=\frac{\epsilon}{b}\big\lbrace \exp\big(br\big)-1\big\rbrace\lambda $. Writing $\bar{\iota}_s = \inf\big\lbrace r\geq 0:\bar{\eta}_r \geq s \big\rbrace$, we have that $\bar{\iota}_s \leq \iota_s$, and
\begin{align}
&\mathbb{P}\bigg( \text{There exists }r\in [0,T]\; ,\;\zeta_r- x \geq \exp(br)\frac{a}{2}  \bigg)\nonumber\\
&= \mathbb{P}\bigg( \text{There exists }r\in [0,T]\; ,\; X_{\eta_r}- x \geq \exp(br)\frac{a}{2}  \bigg)\nonumber\\
&= \mathbb{P}\bigg( \text{There exists }y\in [0,\eta_T]\; ,\; X_{y}-x \geq \exp(b\iota_y)\frac{a}{2}  \bigg)\nonumber\\
&\leq \mathbb{P}\bigg( \text{There exists }y\in [0,\bar{\eta}_T]\; ,\; X_{y}-x \geq \exp\big(b\iota_y\big)\frac{a}{2}  \bigg)\nonumber\\
&\leq \mathbb{P}\bigg(  \text{There exists }y\in [0,\bar{\eta}_T]\; ,\; X_{y}-x \geq \exp\big(b\bar{\iota}_y\big)\frac{a}{2}\bigg).\label{eq: tmp probability 2}
\end{align}
Now suppose that $Z$ satisfies the SDE
\begin{align}
dZ_t =& -b Z_t dt + \sqrt{\lambda \epsilon} dW_t \\
Z_0 =& x,
\end{align}
for a $1d$ Brownian Motion $W$. The solution of this SDE is
\begin{equation}
Z_t = \exp\big(-bt\big)x+ \sqrt{\lambda  \epsilon}  \int_0^t \exp\big(b(s-t) \big)dW_s.
\end{equation}
Now let $\alpha_t =   \sqrt{\lambda \epsilon}\int_0^t \exp\big(bs \big)dW_s$, and observe that the quadratic variation of $\alpha_t$ is $\bar{\eta}_t$. This means that
\begin{align}
\mathbb{P}\bigg( \sup_{t\in [0,T]}Z_t \geq \frac{a}{2}  \bigg) = 
\mathbb{P}\bigg( \text{There exists }t \in [0,T] \text{ such that } \alpha_t+x \geq \frac{a}{2} \exp\big(bt\big) \bigg)\nonumber \\
=\mathbb{P}\bigg( \text{There exists }t \in  \big[0,\bar{\eta}_T\big] \text{ such that } \upsilon_t+x \geq \frac{a}{2} \exp\big(b\bar{\iota}_t\big) \bigg).\label{eq: tmp probability 1}
\end{align}
by the Dambis Dubis-Schwarz Theorem, since $\upsilon_t := \alpha_{\bar{\iota}_t}$ is Brownian. 
It can be observed that the expressions in \eqref{eq: tmp probability 1} and \eqref{eq: tmp probability 2} are equal.

This means that 
\begin{multline*}
\mathbb{P}\bigg(\text{There exists }t \in [0,T] \text{ such that }Z_t \geq \frac{a}{2} \bigg) \\= \mathbb{P}\bigg(  \text{There exists }y\in [0,\bar{\eta}^a_T]\; \text{ such that }\; X_{y}- \norm{u_0-\Phi(\beta_0)}_{\beta_0} \geq \exp(b\bar{\iota}_y)\frac{a}{2}\bigg).
\end{multline*}
Now it can be seen that $\bar{Z}_t := \frac{1}{\sqrt{\lambda \epsilon}}Z_t$ is an Ornstein-Uhlenbeck process, and therefore
\begin{multline*}
\mathbb{P}\bigg(\text{There exists }t \in [0,T] \text{ such that }Z_t \geq \frac{a}{2} \bigg)\\ =\mathbb{P}\bigg(\text{There exists }t \in [0,T] \text{ such that }\bar{Z}_t \geq \frac{a}{2\sqrt{\epsilon\lambda}} \bigg) 
= \int_0^T p^{(-b)}_{\bar{x},\bar{a}}(s)ds, 
\end{multline*}
and we have proved the required bound in \eqref{eq: to prove, final theorem}.
\end{proof}
\medskip

 \begin{lemma}\label{Lemma Bound gamma2}
 There exist positive constants $C_1$ and $C_2$ such that, as long as $\norm{w_t} \leq a \in I(\epsilon,b)$,
 \[
 \big|\gamma_2(u_t,\beta_t)\big| \leq C_1 \norm{w_t} \epsilon + C_2 \norm{w_t}^3
 \]
 \end{lemma}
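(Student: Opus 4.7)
The plan is to bound each of the two summands in $\gamma_2 = \langle w_t, \gamma(u_t,\beta_t)\rangle + \frac{\epsilon}{2}\mathrm{tr}\{\bar{G}Q\bar{G}^{\top}\}$ separately, using (i) the variational constraint $w_{t,1} = \langle v_t, \Phi'(\beta_t)\rangle_{\beta_t} = 0$, (ii) the Floquet identity $\omega_0 P'(\theta) = J(\theta)P(\theta) - P(\theta)\mathcal{S}$, (iii) a second-order Taylor expansion of $F$ about the limit cycle, and (iv) uniform sup-norm bounds on $F, J, P^{\pm 1}, P', \Phi', \Phi'', \Phi'''$, and $G$ over $\beta_t \in [0,2\pi]$. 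A useful preliminary identity is $\M(u_t,\beta_t) = 1 + (P^{-1}(\beta_t)P'(\beta_t)w_t)_1$, which I would establish by writing $\M - 1 = -v_t^{\top}(P^{-\top}\mathbf{e})'$ from \eqref{curve} and simplifying using $(P^{-\top})' = -P^{-\top}P'^{\top}P^{-\top}$. Together with $w_{t,1}=0$, this identity forces the first row of $\bar{G}$ (defined implicitly through \eqref{wee0}--\eqref{wee}) to vanish identically, which is the algebraic reflection of $\{w_{t,1}=0\}$ being invariant under the SDE.

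For the first summand $\langle w_t, \gamma\rangle$, I would Taylor expand $F(u_t) = F(\Phi(\beta_t)) + J(\beta_t)v_t + O(\norm{v_t}^2)$ and substitute into \eqref{eq:gamma0}. After applying $P^{-1}$ and expanding $\M^{-1} = 1 + O(\norm{w_t})$, the residual linear-in-$v_t$ contribution to $P^{-1}\gamma_0$ is parallel to $\mathbf{e}$ with scalar coefficient $\omega_0\langle v_t, \Phi''\rangle_{\beta_t} + \omega_0\langle v_t, A\Phi'\rangle + \langle Jv_t, \Phi'\rangle_{\beta_t}$, where $A := (d/d\theta)[P^{-\top}P^{-1}]$. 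This coefficient vanishes by the adjoint-type identity $\langle Jv, \Phi'\rangle_{\theta} = -\omega_0[\langle v, \Phi''\rangle_{\theta} + \langle v, A\Phi'\rangle]$, which is derived in Section~3 en route to \eqref{eq: PRC again}. The Floquet identity further gives $-P^{-1}Jv_t + \mathcal{S}w_t = -\omega_0 P^{-1}P'w_t$, cancelling the analogous $O(\norm{w_t})$ tangential-drift piece in the explicit expression for $\gamma$ displayed below \eqref{wee0}. What remains is $\gamma = O(\norm{w_t}^2) + O(\epsilon)$, and Cauchy--Schwarz yields $|\langle w_t, \gamma\rangle| \leq C_2\norm{w_t}^3 + C_1'\epsilon\norm{w_t}$.

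The main obstacle is the trace term. The vanishing of the first row of $\bar{G}$ collapses $\mathrm{tr}\{\bar{G}Q\bar{G}^{\top}\}$ to $\sum_{i=2}^d \bar{G}_i Q\bar{G}_i^{\top}$, but each summand is generically $O(1)$ at $w_t=0$, which on its face gives only an $O(\epsilon)$ bound rather than the desired $O(\epsilon\norm{w_t})$. To close the gap I would re-pair $\frac{\epsilon}{2}\mathrm{tr}\{\bar{G}Q\bar{G}^{\top}\}/\norm{w_t}$ with the nonnegative It\^o correction $\frac{\epsilon}{2\norm{w_t}^3}\langle w_t, \bar{G}Q\bar{G}^{\top}w_t\rangle$ that was dropped with a favourable sign just before \eqref{eq SDE vt beta_t 5}; their difference is the Bessel-type gap $\frac{\epsilon}{2\norm{w_t}}\bigl[\mathrm{tr}(M) - \langle w_t, Mw_t\rangle/\norm{w_t}^2\bigr]$ with $M := \bar{G}Q\bar{G}^{\top}$. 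Because $w_{t,1}=0$ and the first row of $\bar{G}$ also vanishes, this gap is controlled by the spectrum of $M$ restricted to a subspace orthogonal to $w_t$, and an explicit expansion of $\bar{G}_i - \bar{G}_i|_{w_t=0}$ in $w_t$ (using the Lipschitz bound on $G$ together with $\M^{-1}\leq 2$ on $I(\epsilon,b)$) produces the needed extra factor of $\norm{w_t}$. Combining this with the bound on $\langle w_t,\gamma\rangle$ yields $|\gamma_2|\leq C_1\epsilon\norm{w_t} + C_2\norm{w_t}^3$, with $C_1,C_2$ depending only on the uniform sup-norms in (iv), the Lipschitz constant of $G$, and the spectral bound $\lambda_G$.
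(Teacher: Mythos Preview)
Your treatment of the first summand $\langle w_t,\gamma\rangle$ is essentially the paper's argument, made more explicit: Taylor-expand $F$ to second order, use the Floquet relation \eqref{eq: derivative P 0} and the adjoint-type cancellation from \S3 to annihilate the contribution linear in $v_t$, then Cauchy--Schwarz the remainder to obtain $O(\|w_t\|^3)+O(\epsilon\|w_t\|)$. You are also right to flag the trace term as the real obstacle, and in fact the paper's own proof is thin at exactly this point: it places $\tfrac12\,\mathrm{tr}\{\bar GQ\bar G^{\top}\}$ into $\gamma_2^2$ and then asserts that Cauchy--Schwarz gives $|\gamma_2^2|\le C\|w_t\|$, but that trace carries no inner product with $w_t$. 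Indeed at $w_t=0$ one has $\bar G=(I-\mathbf{e}\mathbf{e}^{\top})P^{-1}(\beta_t)G(\Phi(\beta_t))$, so $\mathrm{tr}\{\bar GQ\bar G^{\top}\}$ is generically strictly positive there, while the right-hand side $C_1\epsilon\|w_t\|+C_2\|w_t\|^3$ vanishes. So the inequality of the lemma, read literally, fails at $w_t=0$; neither the paper's Cauchy--Schwarz step nor your argument establishes it.

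Your proposed repair does not close this gap. The re-pairing with the discarded It\^o correction $\tfrac{\epsilon}{2\|w_t\|^3}\langle w_t,\bar GQ\bar G^{\top}w_t\rangle$ bounds $\gamma_2$ \emph{minus} that correction, which is a different quantity; the correction belongs to the derivation of \eqref{eq SDE vt beta_t 5}, not to $\gamma_2$, so you are no longer proving the lemma as stated. Even if one accepts the re-pairing as a fix of the theorem's proof rather than of the lemma, the claimed ``extra factor of $\|w_t\|$'' does not emerge for $d\ge 3$. With $M=\bar GQ\bar G^{\top}$ and $\hat w=w_t/\|w_t\|$, the vanishing first row of $\bar G$ and the constraint $w_{t,1}=0$ reduce $\mathrm{tr}(M)-\hat w^{\top}M\hat w$ to its $(d-1)$-dimensional analogue $\mathrm{tr}(M')-\hat w'^{\top}M'\hat w'$; this is identically zero only when $d-1=1$, and for $d\ge 3$ it is generically positive at $w_t=0$. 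A Lipschitz expansion of $\bar G$ about $w_t=0$ controls only the $w_t$-dependent corrections to $M$, not this leading Bessel-type drift, so the combination remains of order $\epsilon/\|w_t\|$ rather than $\epsilon$. What is salvageable is a weaker bound of the form $|\gamma_2|\le C_1'\epsilon + C_2\|w_t\|^3$, which suffices if the use of the lemma in the main proof is adjusted (one must then also control the small-$\|w_t\|$ regime separately, e.g.\ via the Bessel repulsion), but that is a genuine modification of both the statement and the surrounding argument.
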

 \begin{proof}
We can decompose $\gamma_2 = \gamma^1_2 + \epsilon \gamma^2_2$: where $\gamma^1_2$ comprises higher order-corrections to the linearized behaviour, and $\gamma^2_2$ arises from quadratic and cross-variations. In the following equations, since $v_t = P(\beta_t)^{-1}w_t$, and $P(\beta_t)^{-1}$ is continuous on $\mathbb{S}^1$, it must be the case that for come constant $C_P$, $\sup_{\theta\in [0,2\pi]}\norm{P(\theta)^{-1}} \leq C_P$, and therefore $\norm{v_t} \leq C_P \norm{w_t}$. Using the definitions in \eqref{eq:gamma0}, the higher order corrections to the linearized behavior are
 \begin{multline*}
\gamma^1_2 = \bigg\langle w_t , P(\beta_t)^{-1}\bigg\lbrace F(u_t) - \mathfrak{M}(u_t,\beta_t)^{-1}\Phi'(\beta_t)\big\langle  F(u_t),\Phi'(\beta_t) \big\rangle_{\beta_t} \\
- J\big(\beta_t\big)v_t \mathfrak{M}(u_t,\beta_t)^{-1}\big\langle  F(u_t),\Phi'(\beta_t) \big\rangle_{\beta_t}\bigg\rbrace\bigg\rangle \\
+\omega_0^{-1} \big\langle w_t, \mathcal{S}w_t\big\rangle\bigg( \omega_0 -  \mathfrak{M}(u_t,\beta_t)^{-1} \big\langle  F(u_t),\Phi'(\beta_t) \big\rangle_{\beta_t}\bigg),
 \end{multline*}
 and the quadratic / cross-variation terms are
  \begin{multline}
  \gamma^2_2 =- \kappa(u_t,\beta_t) \mathfrak{M}(u_t,\beta_t)^{-1} \bigg\langle w_t , P(\beta_t)^{-1}\Phi'(\beta_t)\bigg\rangle \\-\frac{\M(u_t,\beta_t)^{-2} }{2}\bigg\langle K(u_t,\beta_t)\Phi'(\beta_t),QK(u_t,\beta_t) \Phi'(\beta_t) \bigg\rangle\bigg\langle w_t,P(\beta_t)^{-1} \Phi''(\beta_t)\bigg\rangle \\
+  \mathfrak{M}^{-1}(u_t,\beta_t) \bigg\langle w_t,P\big(\beta_t\big)^{-1}P'\big(\beta_t \big)P\big(\beta_t\big)^{-1}\tilde{G}\big(u_t,\beta_t\big)Q G^{\top}(u_t)P^{-\top}(\beta_t)P^{-1}(\beta_t)\Phi'(\beta_t)\bigg\rangle\\
+  \frac{1}{2} \rm{tr}\big\lbrace  \bar{G}(u_t,\beta_t)Q\bar{G}^{\top}(u_t,\beta_t) \big\rbrace.
  \end{multline}
We start by bounding the quadratic and cross-variation terms, i.e. $\gamma^2_2$. Now since, by assumption, $\norm{w_t} \leq a \in I(\epsilon,b)$, it follows from \eqref{eq: M bounded} that
 \begin{equation}
\M(u_s,\beta_s)^{-1} \leq 2.
\end{equation}
By assumption, there are uniform bounds for the following: $\norm{G(u_t)}$, $\norm{Q}$, $\norm{P(\beta_t)^{-1}}$, $\norm{\Phi'(\beta_t)}$, $\norm{\Phi''(\beta_t)}$, $\norm{P'(\beta_t)}$ and $\norm{P''(\beta_t)}$. The uniform bounds on the latter five matrices follows from the fact that they are continuous and $2\pi$ periodic. We can therefore apply the Cauchy-Schwarz Inequality to each of the above terms in $\gamma^2_2$, finding that for some constant $C_2 > 0$, $\big|\gamma^2_2 \big| \leq C_2\norm{w_t}$.

We now turn to bounding $\gamma^1_2$. First, it follows from the uniform boundedness of $P^{-1}$ that for some constant $C_P$,
\[
\norm{w_t} = \norm{P^{-1}(\beta_t)v_t} \leq C_P\norm{v_t}.
\]
Now it follows from \eqref{eq: minimum G 2} that $\big\langle w_t, P(\beta_t)^{-1}\Phi'(\beta_t)\big\rangle=0$, and therefore
 \[
  \bigg\langle w_t , \mathfrak{M}(u_t,\beta_t)^{-1}\Phi'(\beta_t)\big\langle  F(u_t),\Phi'(\beta_t) \big\rangle_{\beta_t} \bigg\rangle = 0,
 \]
 since this is just a scalar multiple of $\big\langle w_t, P(\beta_t)^{-1}\Phi'(\beta_t)\big\rangle$. Now we saw in the equations following \eqref{defn: H v theta} that
 \[\mathfrak{M}(u_t,\beta_t)^{-1}\big\langle  F(u_t),\Phi'(\beta_t) \big\rangle_{\beta_t} = \omega_0 + O\big(\norm{v_t}^2\big). \]
 This means that
\[
 \big\langle w_t, \mathcal{S}w_t\big\rangle\bigg( \omega_0 -  \mathfrak{M}(u_t,\beta_t)^{-1} \big\langle  F(u_t),\Phi'(\beta_t) \big\rangle_{\beta_t}\bigg) \simeq O\big(\norm{w_t}^4\big)
 \]
 It remains for us to show that
 $F(u_t) - \omega_0 J\big(\beta_t\big)v_t = O(v_t^2)$. But this follows from the multivariate Taylor Remainder Theorem, since for some $\upsilon \in [0,1]$, 
\begin{equation}
F\big(\Phi(\beta_t)+v_t\big)=F\big(\Phi(\beta_t)\big)+J\big(\beta_t\big)v_t + \frac{1}{2}F''\big(\upsilon\Phi(\beta_t) + (1-\upsilon)v_t\big)\cdot v_t \cdot v_t .
\end{equation}
By assumption, the second derivative of $F$ is uniformly bounded, and we have therefore obtained the required bound.

 \end{proof}
\end{appendix}
\bibliographystyle{siam}

\end{document}